\numberwithin{equation}{section}
\theoremstyle{plain}
\newtheorem{Lemma}{Lemma}[section]
\newtheorem{Proposition}[Lemma]{Proposition}
\newtheorem{Corollary}[Lemma]{Corollary}
\newtheorem*{Corollary*}{Corollary}
\newtheorem{Theorem}[Lemma]{Theorem}
\newtheorem*{Theorem*}{Theorem}
\theoremstyle{definition}
\newtheorem{Example}[Lemma]{Example}
\newtheorem{Remark}[Lemma]{Remark}
\setlist[enumerate,1]{label=(\alph*),font=\upshape}
\setlist[enumerate,2]{label=(\roman*),font=\upshape}
\newtheorem{question}[equation]{Question}
\def\HH{\mathscr{H}}
\def\MM{\mathscr{M}}
\def\P{\mathscr{P}}
\def\C{\mathbb{C}}
\def\D{\mathbb{D}}
\def\T{\mathbb{T}}
\def\phi{\varphi}
\renewcommand{\ker}{\operatorname{Ker}}
\newcommand{\beqa}{\begin{eqnarray*}}
\newcommand{\eeqa}{\end{eqnarray*}}
\renewcommand{\leq}{\leqslant}
\renewcommand{\le}{\leqslant}
\renewcommand{\subset}{\subseteq}
\title{Cyclicity in de Branges--Rovnyak spaces}
\author[Fricain]{Emmanuel Fricain}
 \address{Laboratoire Paul Painlev\'e, Universit\'e de Lille, 59 655 Villeneuve d'Ascq C\'edex }
 \email{emmanuel.fricain@univ-lille.fr}
\author[Grivaux]{Sophie Grivaux}
\address{CNRS, Laboratoire Paul Painlev\'e, Universit\'e de Lille, 59 655 Villeneuve d'Ascq C\'edex }
\email{sophie.grivaux@univ-lille.fr}
\keywords{Cyclicity, de Branges--Rovnyak spaces, forward shift operator}
\thanks{The authors were supported by Labex CEMPI (ANR-11-LABX-0007-01) and the project FRONT (ANR-17-CE40 - 0021).}
\subjclass[2010]{30J05, 30H10, 46E22}
\begin{document}

\begin{abstract}
In this paper, we study the cyclicity problem with respect to the forward shift operator $S_b$ acting on the de Branges--Rovnyak space $\HH(b)$ associated to a function $b$ in the closed unit ball of $H^\infty$ and satisfying $\log(1-|b|)\in L^1(\mathbb T)$. We present a characterisation of cyclic vectors for $S_b$ when $b$ is a rational function which is not a finite Blaschke product. This characterisation can be derived from the description,
given in  \cite{MR4246975},
of invariant subspaces of $S_b$ in this case, but we provide here an elementary proof. We also study the situation where $b$ has the form $b=(1+I)/2$, where $I$ is a non-constant inner function such that the associated model space $K_I=\HH(I)$ has an orthonormal basis of reproducing kernels. 

\end{abstract}

\maketitle
\par\bigskip
\hfill \emph{To the memory of Mohamed Zarrabi (1964 -- 2021)}
\par\bigskip
\section{Introduction}
In this paper, we study the cyclicity problem with respect to the forward shift operator $S_b$ acting on the de Branges--Rovnyak space $\HH(b)$, associated to a function $b$ belonging to the closed unit ball of $H^\infty$ and satisfying $\log(1-|b|)\in L^1(\mathbb T)$. This problem of cyclicity has a long and outstanding history and many efforts have been dedicated to solving it in various reproducing kernel Hilbert spaces. It finds its roots in the pioneering work of Beurling who showed that cyclicity of a function $f$ in the Hardy space $H^2$ is equivalent to $f$ being outer. Brown and Shields studied the cyclicity problem in the Dirichlet spaces $\mathcal{D}_\alpha$  for polynomials that do not have zeros inside the disc, but that may have some zeros on its boundary. Such functions are cyclic in $\mathcal{D}_\alpha$ if and only if $\alpha \leq 1$. They also proved that the set of zeros on the unit circle (in the radial sense) of cyclic functions in the Dirichlet space $\mathcal{D}_\alpha$  has zero logarithmic capacity, and this led them to ask whether any outer function with this property is cyclic \cite{BS1}. This problem is still open although there has been relevant contributions to the topic by a number of authors; e.g. see \cite{B2, BC1, EFKR1, EFKR2, HS1, RS1}. We also mention the paper \cite{MR3475457} where the authors prove the Brown--Shields conjecture in the context of some particular Dirichlet type spaces $\mathcal D(\mu)$, which happen to be related to our context of de Branges--Rovnyak spaces \cite{MR3110499}. 
\par\smallskip
The de Branges--Rovnyak spaces $\HH(b)$ (see the precise definition in Section~\ref{sec2}) have been introduced by L. de Branges and J. Rovnyak in the context of model theory (see \cite{MR0215065}). A whole class of Hilbert space contractions is unitarily equivalent to $S^*|\HH(b)$, for an appropriate function $b$ belonging to the closed unit ball of $H^{\infty}$. Here $S$ is the forward shift operator on $H^2$ and $S^*$, its adjoint, is the backward shift operator on $H^2$. The space $\HH(b)$ is invariant with respect to $S^*$ for every $b$ in the closed unit ball of $H^\infty$, and $S^*$ defines a bounded operator on $\HH(b)$, endowed with its own Hilbert space topology.
On the contrary, $\HH(b)$ is invariant with respect to $S$ if and only if $\log(1-|b|)\in L^1(\mathbb T)$, i.e. if and only if $b$ is a non-extreme point of the closed unit ball of $H^{\infty}$. See \cite[Corollary 25.2]{FM2}. In \cite{MR3309352}, it is proved that if $\log(1-|b|)\in L^1(\mathbb T)$, the cyclic vectors of $S^*|\HH(b)$ are precisely the cyclic vectors of $S^*$ which live in $\HH(b)$. Note that cyclic vectors of $S^*$ have been characterised by Douglas--Shapiro--Shields \cite{MR203465}. The result of \cite{MR3309352} is based on a nice description, due to Sarason, of closed invariant subspaces of $S^*|\HH(b)$. See also \cite[Corollary 24.32]{FM2}. Unfortunately an analogous description of closed invariant subspaces of $S_b=S|\HH(b)$ remains an unsolved and difficult problem. In \cite{MR4246975}, Gu--Luo--Richter give an answer in the case where $b$ is a rational function which is not inner, generalising a result of Sarason \cite{MR847333}.
\par\smallskip
The purpose of this paper is to study the cyclic vectors of $S_b$ when the function $b$ is such that $\log(1-|b|)\in L^1(\mathbb T)$. In Section~\ref{sec2}, we present a quick overview of some useful properties of de Branges--Rovnyak spaces. Then, in Section~\ref{sec3}, we give some general facts on cyclic vectors and completely characterise holomorphic functions in a neighborhood of the closed unit disc which are cyclic for $S_b$. In Section~\ref{sec4}, we give a characterisation of cyclic vectors for $S_b$ when $b$ is rational (and not inner). Of course, this characterisation can be derived from the description,
given in  \cite{MR4246975},
of invariant subspaces of $S_b$ when $b$ is a non-inner rational function. Nevertheless, we will give  a more direct and easier proof of this characterisation. Finally, Section~\ref{sec5} will be devoted to the situation where $b=(1+I)/2$, where $I$ is a non-constant inner function such that the associated model space $K_I=\HH(I)$ has an orthonormal basis of reproducing kernels.

\par\bigskip
 \section{Preliminaries on $\HH(b)$ spaces}\label{sec2}
\subsection{Definition of de Branges-Rovnyak spaces} Let  
 $$\operatorname{ball}(H^{\infty}) := \Big\{b \in H^{\infty}: \|b\|_{\infty} = \sup_{z \in \D} |b(z)| \leq 1\Big\}$$ be the closed unit ball of $H^{\infty}$, the space of bounded analytic functions on the open unit disk $\D$, endowed with the sup norm.
For $b \in \operatorname{ball}(H^{\infty})$, the \emph{de Branges--Rovnyak space} $\HH(b)$ is the reproducing kernel Hilbert space on $\mathbb D$ associated with the positive definite kernel  $k_{\lambda}^b$, $\lambda\in\D$, defined as
\begin{equation}\label{eq:original kernel H(b)}
k^{b}_{\lambda}(z) = \frac{1 -  \overline{b(\lambda)}b(z)}{1 - \overline{\lambda} z}, \quad  z \in \D.
\end{equation}
 
It is known that $\HH(b)$ is contractively contained in the well-studied Hardy space $H^2$ of analytic functions $f$ on $ \D $ for which 
$$\|f\|_{H^2} := \Big(\sup_{0 < r < 1} \int_{\T} |f(r \xi)|^2 dm(\xi)\Big)^{\frac{1}{2}}<\infty,$$
where $m$ is the normalised Lebesgue measure on the unit circle $\T = \{\xi \in \C: |\xi| = 1\}$ \cite{Duren, garnett}. For every $f \in H^2$, the radial limit 
$\lim_{r \to 1^{-}} f(r \xi) =: f(\xi)$ (even the non-tangential limit $f(\xi):=\lim_{\substack{z\to\xi\\  \sphericalangle}}f(z)$) exists for $m$-a.e. $\xi \in \T$, and
\begin{equation}\label{knnhHarsysu}
\|f\|_{H^2}  = \Big(\int_{\T} |f(\xi)|^2 dm(\xi)\Big)^{\frac{1}{2}}.
\end{equation}
Though $\HH(b)$ is contractively contained in $H^2$, it is generally not closed in the $H^2$ norm. It is known that $\HH(b)$ is closed in $H^2$ if and only if $b=I$ is an inner function, meaning that $|I(\zeta)|=1$ for a.e. $\zeta\in\mathbb T$. In this case, $\HH(b)=K_I=(I H^2)^\perp$ is the so-called \emph{model space} associated to $I$. Note that $K_I=H^2\cap I\overline{zH^2}$ (see \cite[Proposition 5.4]{MR3526203}), and then $K_I=\ker T_{\bar{I}}$, where $T_{\bar{I}}$ is the Toeplitz operator with symbol $\bar I$ defined on $H^2$ as $T_{\bar I}f=P_+(\bar I f)$, where $P_+$ denotes the orthogonal projection from $L^2$ onto $H^2$.  

\par\smallskip
We refer the reader to the book \cite{MR1289670} by Sarason and to the monograph \cite{FM1}, \cite{FM2} by Fricain and Mashreghi for an in-depth study of de Branges-Rovnyak spaces and their connections to numerous other topics in operator theory and complex analysis.
\par\smallskip
In this paper, we will always assume that $b$ is a non-extreme point of $\operatorname{ball}(H^{\infty})$, which is equivalent to requiring that $\log(1-|b|)\in L^1(\mathbb T)$. Under this assumption, there is a unique outer function $a$, called \emph{the pythagorean mate} for $b$, such that $a(0)>0$ and $|a|^2+|b|^2=1$ a.e. on $\mathbb T$. There are two important subspaces of $\HH(b)$ which can be defined via this function $a$. The first one is the space $\MM(a)=aH^2$, equipped with the range norm
\[
\|af\|_{\MM(a)}=\|f\|_2,\qquad f\in H^2.
\]
The second one is $\MM(\bar a)=T_{\bar a}H^2$, equipped also with the range norm
\[
\|T_{\bar a}f\|_{\MM(\bar a)}=\|f\|_2,\qquad f\in H^2.
\]
Note that since $a$ is outer, the Toeplitz operator $T_{\bar a}$ is one-to-one and so the above norm is well defined. It is known that $\MM(a)$ is contractively contained into $\MM(\bar a)$, which itself is contractively contained into $\HH(b)$. See \cite[Theorem 23.2]{FM2}. Note that $\MM(a)$ is not necessarily closed in $\HH(b)$. See \cite[Theorem 28.35]{FM2} for a characterisation of closeness of $\MM(a)$ in $\HH(b)$-norm. There is also an important relation between $\HH(b)$ and $\MM(\bar a)$ which gives a recipe to compute the norm in $\HH(b)$. Indeed, if $f\in H^2$, then $f\in\HH(b)$ if and only if there is a function $f^+\in H^2$ satisfying $T_{\bar b}f=T_{\bar a}f^+$ (and then $f^+$ is necessarily unique). Moreover, in this case, we have 
\[
\|f\|_b^2=\|f\|_2^2+\|f^+\|_2^2.
\]
Also, 
\begin{equation}\label{Inconnu}
 \langle f,g \rangle_b=\langle f,g \rangle_2+\langle f^+,g^+ \rangle_2\quad\textrm{ for every }f,g\in\HH(b).
\end{equation}
See \cite[Theorem 23.8]{FM2}. Finally, let us recall that $\HH(b)=\MM(\bar a)$ if and only if $(a,b)$ forms a corona pair, that is 
\[
\tag{HCR} \qquad \inf_{\mathbb D}(|a|+|b|)>0.
\]
See \cite[Theorem 28.7]{FM2}.

\par\smallskip
A crucial fact on de Branges-Rovnyak space is that the space $\HH(b)$ is invariant with respect to the shift operator $S: f \mapsto zf$ if and only if the function $b$ is non-extreme. Since we will consider in this paper only the case where $b$ is non-extreme, $\HH(b)$ is  indeed invariant by $S$, and $S$ defines a bounded operator on $\HH(b)$, endowed with its own Hilbert space topology, which we will denote by $S_b$. The functions $z^n$ belong to $\HH(b)$
 for every $n\ge 0$. Actually, we have
\begin{equation}\label{eq:density-polynomial}
\mbox{Span}(z^n:n\geq 0)=\HH(b),
\end{equation}
where $\mbox{Span}(A)$ denotes the closed linear span generated by vectors from a certain family $A$.
In other words, the polynomials are dense in $\HH(b)$. See \cite[Theorem 23.13]{FM2}.
Note that \eqref{eq:density-polynomial} exactly means that the constant function $1$ is cyclic for $S_b$. 
\par\smallskip
Another tool which will turn out to be useful when studying the cyclicity for the shift operator is the notion of \emph{multiplier}. Recall that the set $\mathfrak M(\HH(b))$ of multipliers of $\HH(b)$ is defined as
\[
\mathfrak M(\HH(b))=\{\varphi\in\mbox{Hol}(\mathbb D): \varphi f\in \HH(b),\forall f\in\HH(b)\}.
\]
Using the closed graph theorem,  it is easy to see that when $\varphi\in \mathfrak M(\HH(b))$, then $M_\varphi$, the multiplication operator by $\varphi$, is bounded on $\HH(b)$. The algebra of multipliers is  a Banach algebra when equipped with the norm $\|\varphi\|_{\mathfrak M(\HH(b)}=\|M_\varphi\|_{\mathcal L(\HH(b))}$.  Using standard arguments, we see that $\mathfrak M(\HH(b))\subset H^\infty\cap\HH(b)$. In general, this inclusion is strict. See \cite[Example 28.24]{FM2}. However, we will encounter below a situation (when $b$ is a rational function which is not a finite Blaschke product) where we have the equality $\mathfrak M(\HH(b))= H^\infty\cap\HH(b)$. 

\par\smallskip

\subsection{Some properties of the reproducing kernels of $H^2$ in de Branges-Rovnyak spaces}
Recall that we are supposing that $b$ is non-extreme. If we denote by $k_\lambda(z)=(1-\overline{\lambda}z)^{-1}$ the reproducing kernel of $H^2$ at the point $\lambda\in\D$, then $k_\lambda$ belongs to $\HH(b)$ and 
\begin{equation}\label{eq:density-crk}
\mbox{Span}(k_\lambda:\lambda\in\mathbb D)=\HH(b).
\end{equation}
See \cite[Corollary 23.26]{FM2} or \cite[Lemma 7]{MR3390195}. We also know (see \cite[Theorem 23.23]{FM2}) that $bk_\lambda\in \HH(b)$ for every $\lambda\in\mathbb D$, and that  for every $f\in\HH(b)$ we have
\begin{equation}\label{eq1EZD:lem-completeness}
\langle f,k_\lambda \rangle_b=f(\lambda)+\frac{b(\lambda)}{a(\lambda)}f^+(\lambda)\quad\mbox{and}\quad \langle f,bk_\lambda\rangle_b=\frac{f^+(\lambda)}{a(\lambda)}\cdot
\end{equation}
Using these two equations, we can produce an interesting complete family in $\HH(b)$ which will be of use to us.

\begin{Lemma}\label{Lem:completenes}
Let $b$ be a non-extreme point of the closed unit ball of $H^{\infty}$, and let
 $c$ be a complex number with $|c|<1$. Then 
\[
\mbox{Span}(k_\mu-cbk_\mu:\mu\in\mathbb D)=\HH(b).
\]
\end{Lemma}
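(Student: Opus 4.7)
My plan is to prove the lemma by duality: I will show that any $f \in \HH(b)$ orthogonal to every $k_\mu - cbk_\mu$ must be zero. Using the formulas in~\eqref{eq1EZD:lem-completeness}, the condition $\langle f, k_\mu - cbk_\mu\rangle_b = 0$ for all $\mu \in \D$ rewrites as
\[
f(\mu) + \frac{b(\mu) - \bar c}{a(\mu)}\, f^+(\mu) = 0, \qquad \mu \in \D,
\]
or equivalently $af = (\bar c - b) f^+$ as an identity in $H^2$, and hence a.e.\ on $\T$.

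The next step is to combine this with the defining relation $T_{\bar b} f = T_{\bar a} f^+$, which is equivalent to $\bar b f - \bar a f^+ \in \overline{zH^2}$ on $\T$. Substituting $f = (\bar c - b) f^+/a$ (valid on $\T$ since $a$, being outer, is nonzero a.e.) and using $|a|^2+|b|^2 = 1$ on $\T$, a direct calculation gives
\[
\bar b f - \bar a f^+ \;=\; \frac{(\bar b \bar c - |b|^2 - |a|^2)\, f^+}{a} \;=\; -\,\frac{(1 - \bar b \bar c)\, f^+}{a},
\]
so $(1 - \bar b \bar c)\,f^+/a \in \overline{zH^2}$. This is the point where the hypothesis $|c|<1$ enters: $1 - bc$ is then invertible in $H^\infty$ (since $\|cb\|_\infty \leq |c| < 1$), so $1 - \bar b \bar c$ is invertible in $\overline{H^\infty}$. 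Because $\overline{zH^2}$ is stable under multiplication by $\overline{H^\infty}$, clearing this factor yields $f^+/a \in \overline{zH^2}$ on $\T$.

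To conclude, I will invoke a Smirnov-class argument. The function $F := f^+/a$ is holomorphic on $\D$ (as the outer function $a$ is zero-free on $\D$) and belongs to the Smirnov class $N^+$, since $f^+ \in H^2 \subset N^+$ and $1/a \in N^+$ for outer $a$. Its boundary values lie in $\overline{zH^2} \subset L^2(\T)$, so Smirnov's theorem upgrades $F$ to $H^2$. But then $F \in H^2 \cap \overline{zH^2} = \{0\}$, so $f^+ = aF = 0$. Plugging this back into $af = (\bar c - b)f^+$ gives $af \equiv 0$, and since $a \neq 0$ on $\D$ we conclude $f = 0$.

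The main obstacle I foresee is spotting the middle algebraic identity: the orthogonality relation and the Pythagorean-mate equation must interact so that the surviving factor is exactly $1 - \bar b \bar c$, which is precisely the quantity that $|c|<1$ makes invertible in $H^\infty$. Once this cancellation is in hand, the passage from $f^+/a \in \overline{zH^2}$ to $f^+ = 0$ via Smirnov is standard $\HH(b)$-space machinery.
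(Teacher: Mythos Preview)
Your proof is correct and follows essentially the same route as the paper's: both take a vector orthogonal to every $k_\mu-cbk_\mu$, use \eqref{eq1EZD:lem-completeness} to obtain $af=(\bar c-b)f^+$, combine with $\bar b f-\bar a f^+\in\overline{zH^2}$ to extract the factor $1-\bar b\bar c$, invert it in $\overline{H^\infty}$ via $|c|<1$, and then conclude $f^+/a\in H^2\cap\overline{zH^2}=\{0\}$ through a Smirnov argument. The only cosmetic difference is that the paper reads off $h^+/a\in L^2$ directly from the equation and cites the outer-divisor fact, whereas you phrase the same step as $f^+/a\in N^+$ with $L^2$ boundary values.
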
 

\begin{proof}
Let $h\in\HH(b)$, and assume that for every $\mu\in\mathbb D$, $h$ is orthogonal in $\HH(b)$ to  $k_\mu-cbk_\mu$. According to \eqref{eq1EZD:lem-completeness}, we have
\[
0=h(\mu)+\frac{b(\mu)}{a(\mu)}h^+(\mu)-\overline{c}\frac{h^+(\mu)}{a(\mu)}\cdot
\]
This can be rewritten as $ah=-bh^++\overline{c}h^+$. Multiplying this equality by $\bar b$ and using the fact that $|a|^2+|b|^2=1$ a.e. on $\mathbb T$, we obtain
\[
a(\bar b h-\bar a h^+)=-(1-\overline{c}\bar b)h^+.
\]
Note that $|1-\overline{c}\bar b|\geq 1-|c|>0$, and so the last identity can be written as 
\[
\frac{\bar bh-\bar ah^+}{1-\overline{c}\bar b}=-\frac{h^+}{a}\cdot
\]
On the one hand, this equality says that $\frac{h^+}{a}$ belongs to $ L^2$ and since $a$ is outer, we have $\frac{h^+}{a}\in H^2$. See \cite[page 43]{MR1864396}. On the other hand, by definition of $h^+$, the function $\bar bh-\bar ah^+$ belongs to $\overline{H^2_0}$ and since $(1-\overline{c}\bar b)^{-1}$ is in $ \overline{H^\infty}$, we also have $\frac{h^+}{a}\in \overline{H^2_0}$. Then $\frac{h^+}{a}$ belongs to $ H^2\cap \overline{H^2_0}=\{0\}$. Finally we get that $h^+=0$ and thus that $h=0$.
\end{proof}

\par\smallskip

\subsection{Boundary evaluation points on $\HH(b)$}
An important tool in the cyclicity problem will be the boundary evaluation points for $\HH(b)$. It is known that the description of these points depends on the inner-outer factorisation of $b$. Recall that any $b$ in $\operatorname{ball}(H^{\infty})$ can be decomposed as
\begin{equation} \label{E:dec-b-bso}
b(z)= B(z) S_\sigma(z) O(z), \qquad z \in \D,
\end{equation}
where
\[
B(z) = \gamma\,\prod_{n\ge 1} \left(\, \frac{|a_n|}{a_n}\frac
{a_n-z}{1-\overline{a}_nz} \,\right) \quad\textrm{is a Blaschke product,} 
\]
with  $|\gamma|=1$, $a_n\in\D$ for every $n\ge1$,  and $\sum_{n\ge 1}(1-|a_n|)<+\infty$,
\[
S_\sigma(z) = \exp\left( -\int_\T \frac{\xi+z}{\xi-z} \, d\sigma(\xi) \right) \quad\textrm{is a singular inner function,}
\]
with $\sigma$ a positive finite Borel measure on $\T$ which is singular with respect to the Lebesgue measure, and 
\[
O(z) = \exp\left( \int_\T \frac{\xi+z}{\xi-z} \log|b(\xi)| \,dm(\xi) \right)
\]
is the outer part of $b$.
Now, let $E_0(b)$ be the set of points $\zeta\in\mathbb T$ satisfying the following condition:

 \begin{equation}
  \sum_n\frac{1-|a_n|^2}{|\zeta-a_n|^{2}}+\int_{\T} \frac{d\sigma(\xi)}{|\zeta-\xi|^{2}}+\int_{\T} \frac{\big| \log|b(\xi)| \big|}{|\zeta-\xi|^{2}} \,\, dm(\xi)<\infty.
 \end{equation}
 
It is proved in \cite{MR2390675} that for every point $\zeta\in\mathbb T$, every function $f\in \HH(b)$ has a non-tangential limit at $\zeta$ if and only if $\zeta\in E_0(b)$.  This is also equivalent to the property that $b$ has an {\emph{angular derivative (in the sense of Carath\'eodory) at $\zeta$}, meaning that $b$ and $b'$ both have a non-tangential limit at $\zeta$ and $|b(\zeta)|=1$. 
Moreover, in this case, the linear map 
\begin{equation}\label{eq:radial-limit-ADC}
f\longmapsto f(\zeta):=\lim_{\substack{z\to\zeta\\  \sphericalangle}}f(z)
\end{equation} 
is bounded on $\HH(b)$. 
The function $k_\zeta^b$ defined by
\[
 k_{\zeta}^b(z)=\frac{1 -  \overline{b(\zeta)}b(z)}{1 - \overline{\zeta} z}, \quad  z \in \D,
\]
belongs to $\HH(b)$, and
\[
 \langle f,k_{\zeta}^b\rangle_b=f(\zeta)\quad\textrm{ for every } f\in\HH(b).
\]
We call the function $k_\zeta^b$ \emph{the reproducing kernel of $\HH(b)$ at the point $\zeta$}, and \eqref{eq:radial-limit-ADC} means that the reproducing kernels $k_{z}^b$ tend weakly to $k_{\zeta}^b$ as $z\in\D$ tends non-tangentially to $\zeta$. See \cite[Theorem 25.1]{FM2}. There is also a nice connection between the boundary evaluation points and the point spectrum of $S_b^*$ in the case where $b$ is a non-extreme point in $\operatorname{ball}(H^{\infty})$: for $\zeta\in\mathbb T$, we have that
\begin{equation}\label{point-spectrum-boundary}
\bar\zeta \mbox{ is an eigenvalue for }S_b^* \mbox{ if and only if }  b \mbox{ has an angular derivative at }\zeta.
\end{equation}
\par\smallskip

The boundary evaluation points play a particular role in the description of certain orthogonal basis of  reproducing kernels in model spaces $K_I$, the 
so-called {\emph{Clark basis}}. Given an inner function $I$ and $\alpha\in\T$, recall that by Herglotz theorem, there is a unique finite positive Borel measure $\sigma_{\alpha}$ on $\mathbb T$, singular with respect to the Lebesgue measure, such that 
\begin{equation}\label{def-clark-measure}
\frac{1-|I(z)|^2}{|\alpha-I(z)|^2}=\int_{\mathbb T}\frac{1-|z|^2}{|\xi-z|^2}\,d\sigma_{\alpha}(\xi),\qquad z\in\mathbb D.
\end{equation}
The collection $(\sigma_{\alpha})_{\alpha\in\T}$ is the family of \emph{Clark measures} of $I$. 
\par\smallskip
Let $E_\alpha=\{\zeta\in E_0(I): I(\zeta)=\alpha\}.$ By \cite[Theorem 21.5]{FM2}, the point $\zeta$ belongs to $E_\alpha$ if and only if the measure $\sigma_\alpha$ has an atom at $\zeta$. In this case,
\begin{equation}\label{eq:nice-formula-derivative-norme-kernel-clark-measure}
\sigma_\alpha(\{\zeta\})=\dfrac{1}{|I'(\zeta)|}=\dfrac{1}{\|k_{\zeta}^I\|_2^2}\cdot
 \end{equation}
 See \cite[Theorems 21.1 and 21.5]{FM2}. 
When $\sigma_\alpha$ is a discrete measure, its support is exactly the set $E_\alpha$, which is necessarily countable, and we write it as 
\begin{equation}
E_\alpha=\{\zeta_n:n\geq 1\}=\{\zeta\in E_0(I): I(\zeta)=\alpha\}.
\end{equation}
Then, in this case, Clark  proved in \cite{MR301534} that the family $\{k_{\zeta_n}^I: n\geq 1\}$ forms an orthogonal basis of $K_I$ (and the family $\{{k_{\zeta_n}^I}/{\|k_{\zeta_n}^I\|_2}: n\geq 1\}$ forms an orthonormal basis of $K_I$). It is called \emph{the Clark basis of $K_I$} associated to point $\alpha\in\mathbb T$.

\subsection{A description of $\HH(b)$ when $b$ is a rational function}
Although the contents of the space $\HH(b)$ may seem mysterious for a general non-extreme $b \in \operatorname{ball}(H^{\infty})$, it turns out that when $b$ is a rational function (and not a finite Blaschke product -- in which case $b$ is an inner function, and thus extreme),  the description of $\HH(b)$ is quite explicit. Since our $b$ is a non-extreme point of $\operatorname{ball}(H^{\infty})$, it admits a pythagorean mate $a$, which is also a rational function. In fact, the function $a$ can be obtained from the Fej\'{e}r--Riesz theorem  (see \cite{MR3503356}). Let $ \zeta_1, \dots, \zeta_n $ denote the {\em distinct} roots of $ a $ on $ \T $, with corresponding  multiplicities $ m_1, \dots, m_n $, and define the polynomial $ a_1 $ by 
\begin{equation}\label{eq:definition of a}
a_1(z): =\prod_{k=1}^n (z-\zeta_k)^{m_k}.
\end{equation}
Results from  \cite{MR3110499, MR3503356} show that $\HH(b)$ has an explicit description as 
\begin{equation}\label{eq:formula for H(b)}
\HH(b)=a_1H^2 \oplus \P_{N-1}=\mathcal{M}(a_1) \oplus \P_{N-1},
\end{equation}
where $ N=m_1+\dots+m_n $, and $\P_{N-1}$ denotes the set of polynomials of degree at most $N-1$. Since $a/a_1$ is invertible in $H^\infty=\mathfrak M(H^2)$, note that $\mathcal M(a)=\mathcal M(a_1)$. The notation $\oplus$ above denotes a topological direct sum in $\HH(b)$. But this sum may not be an orthogonal one. See \cite{MR3503356}. In particular, $\mathcal{M}(a_1) \cap \P_{N - 1} = \{0\}$. Moreover, if $ f\in\HH(b) $ is decomposed with respect to \eqref{eq:formula for H(b)} as 
\begin{equation}\label{uUUiipPPS}
 f=a_1\widetilde{f}+p, \quad  \mbox{where $\widetilde{f}  \in H^2$ and  $p \in \P_{N - 1}$},
 \end{equation}
an equivalent norm on $ \HH(b) $  (to the natural one induced by the positive definite kernel $k_{\lambda}^{b}$, $\lambda\in\D$, above)  is
\begin{equation}\label{eq:norm in h(b)}
\vvvert a_1\widetilde f+p\vvvert^{2}_{b}:=\|\widetilde{f}\|^2_{H^2}+\|p\|^2_{H^2}.
\end{equation}
Note that the functions $\widetilde{f}  \in H^2$ and  $p \in \P_{N - 1}$ appearing in the decomposition (\ref{uUUiipPPS}) are unique.
 It is important to note that $ \vvvert \cdot\vvvert_b $ is only equivalent to the original norm $\|\cdot\|_b$ associated to the kernel in~\eqref{eq:original kernel H(b)}, and its scalar product as well as the reproducing kernels and the adjoints of operators defined on $\HH(b)$ will be different. However, the cyclicity problem for $S_b$ does not depend on the equivalent norm we consider. So, in the rational case, there is no problem to work  with the norm given by \eqref{eq:norm in h(b)}.
 \par\smallskip
 \par\smallskip
 Note also that when the zeros $\zeta_1,\ldots,\zeta_n$ of the polynomial $a_1$ are simple (i.e. when $m_k=1$, $1\le k\le n$), then the space $\HH(b)$ coincides with a Dirichlet type space $\mathcal{D}(\mu)$, where $\mu$ is a finite sum of Dirac masses at the points $\zeta_k$, $1\le k\le n$. See \cite{MR3110499}. So our results are also connected to the works \cite{EFKR1} and \cite{EFKR2} on the cyclicity problem for Dirichlet spaces.
 
 \par\smallskip
 Using \eqref{uUUiipPPS} and the standard estimate that any $g \in H^2$ satisfies 
 \begin{equation}\label{gggbigggooooo}
 |g(z)| \leq \frac{\|g\|_{2}}{\sqrt{1 - |z|^2}} \quad \mbox{for all $z \in \D$},
 \end{equation}
 we see that for fixed $1 \leq k \leq n$ and for each $f \in \HH(b)$ we have 
 \begin{equation}\label{q}
 f(\zeta_k) = \lim_{\substack{z\to\zeta_k\\  \sphericalangle}}f(z) = p(\zeta_k),
 \end{equation}
 where $f = a_1 \widetilde{f} + p$ with $\widetilde{f} \in H^2$ and $p \in \P_{N - 1}$. In particular, 
 \begin{equation}\label{eq:E0b-rationnel}
 E_0(b)=\{\zeta_k:1\leq k\leq n\}. 
 \end{equation}
 Finally, let us mention that when $b \in \operatorname{ball}(H^{\infty})$ is a rational function and not a finite Blaschke product, then $\mathfrak M(\HH(b))=H^\infty\cap \HH(b)$.  See \cite{MR3967886}. 
 
 \par\smallskip
 \subsection{A description of $\HH(b)$ when $b=(1+I)/2$, with $I$ an inner function}
 There is another situation where we have an explicit description of the space \(\HH(b)\): this is when \(b=\frac{1+I}{2}\) and \(I\) is an inner function with \(I\not\equiv 1\). In this case, \(b\) is a non-extreme point of \(\textrm{ball}(H^{\infty})\), and its Pythagorean mate (up to a unimodular constant) is \(a=\frac{1-I}{2}\). Moreover, \((a,b)\) satisfies (HCR), since 
\(|a|^{2}+|b|^{2}\geq\frac{1}{2}\) on $\mathbb D$. In particular \(\HH(b)=\mathcal M(\bar a)\), with equivalent norms.
\par\smallskip
Under the assumption that $I(0)\not = 0$, it is proved in \cite{MR3850543} that 
\begin{equation}\label{eq:orthogonalite-hb-ma-KI}
\HH(b)=\mathcal M(a)\stackrel{\perp}{\oplus}_b  K_I,
\end{equation}
where the direct sum \(\stackrel{\perp}{\oplus}_b\)  is orthogonal with respect to the $\HH(b)$ norm. 
In particular, every $f\in\HH(b)$ can be written in a unique way as 
\begin{equation}\label{decomposition2-hb}
f=(1-I)g_1+g_2,\qquad \mbox{with }g_1\in H^2\mbox{ and }g_2\in K_I.
\end{equation}
It turns out that the same proof holds without any assumption on the value of $I(0)$. For completeness's sake, we present it in Lemma \ref{machin} below. 
We also give an equivalent norm on $\HH(b)$ analogue to (\ref{eq:norm in h(b)}).

\begin{Lemma}\label{machin}
 Let \(I\) be an inner function with \(I\not\equiv 1\), and let \(b=(1+I)/2\). Then the following assertions hold:
 \begin{enumerate}
  \item [\emph{(i)}] \(\HH(b)=(1-I)H^{2}\stackrel{\perp}{\oplus}_bK_{I}\), where 
  \(\stackrel{\perp}{\oplus}_b\) denotes an orthogonal direct sum in \(\HH(b)\);
  \item[\emph{(ii)}] if for \(f=(1-I)g_{1}+g_{2}\in\HH(b)\), \(g_{1}\in H^{2}\), \(g_{2}\in K_{I}\), we define
  \[
|||f|||_{b}^{2}=||g_{1}||_{2}^{2}+||g_{2}||_{2}^{2},
\]
then \(|||\,.\,|||_b\) is a norm on \(\HH(b)\) which is equivalent to \(||\,.\,||_{b}\).
 \end{enumerate}
\end{Lemma}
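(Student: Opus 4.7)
The strategy is to exploit the identification $\HH(b) = \MM(\bar a)$ (as sets, valid because $(a,b)$ satisfies (HCR) since $|a|^2+|b|^2 \geq \frac{1}{2}$ on $\D$), combined with the orthogonal $H^2$-decomposition $H^2 = IH^2 \stackrel{\perp}{\oplus} K_I$. Since $a = (1-I)/2$ is outer, the Toeplitz operator $T_{\bar a}$ is injective on $H^2$, and therefore every $f \in \HH(b)$ admits a unique representation $f = T_{\bar a}F$ with $F \in H^2$. Splitting $F = Ih + g_2$ with $h \in H^2$ and $g_2 \in K_I$, one computes directly that $T_{\bar a}(Ih) = (Ih-h)/2 \in (1-I)H^2$ and $T_{\bar a}(g_2) = g_2/2 \in K_I$. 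This yields the algebraic direct-sum decomposition $\HH(b) = (1-I)H^2 \oplus K_I$, with uniqueness of the two summands following at once from the injectivity of $T_{\bar a}$ and the fact that $IH^2 \cap K_I = \{0\}$.

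The crucial step is to identify $f^+$ explicitly. Since $\bar a,\bar b \in \overline{H^\infty}$, the Toeplitz algebra identity $T_{\bar a}T_{\bar b} = T_{\bar a \bar b} = T_{\bar b}T_{\bar a}$ holds, so for $f = T_{\bar a}F$ we have
\[
T_{\bar a}(T_{\bar b}F) = T_{\bar b}(T_{\bar a}F) = T_{\bar b}f,
\]
and the defining property of $f^+$ combined with the injectivity of $T_{\bar a}$ forces $f^+ = T_{\bar b}F$. Taking the preimage $F = -2Ig_1 + 2g_2$ that corresponds to $f = (1-I)g_1 + g_2$, using the boundary identity $\bar b I = b$ on $\T$ together with $T_{\bar b}g_2 = g_2/2$ for $g_2 \in K_I$, one concludes $f^+ = -(1+I)g_1 + g_2$. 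In particular, $((1-I)g_1)^+ = -(1+I)g_1$ and $g_2^+ = g_2$ for every $g_2 \in K_I$.

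Both assertions then follow from \eqref{Inconnu}. For orthogonality in $\HH(b)$, with $g_1 \in H^2$ and $g_2 \in K_I$,
\[
\langle (1-I)g_1, g_2 \rangle_b = \langle (1-I)g_1, g_2 \rangle_2 + \langle -(1+I)g_1, g_2 \rangle_2 = -2\langle Ig_1, g_2 \rangle_2 = 0,
\]
since $K_I \perp IH^2$ in $H^2$. This establishes (i). For (ii), orthogonality gives $\|f\|_b^2 = \|(1-I)g_1\|_b^2 + \|g_2\|_b^2$, and the pointwise identity $|1-I|^2 + |1+I|^2 = 4$ on $\T$ (immediate from $|I|=1$ a.e.) yields $\|(1-I)g_1\|_b^2 = \|(1-I)g_1\|_2^2 + \|(1+I)g_1\|_2^2 = 4\|g_1\|_2^2$ and $\|g_2\|_b^2 = 2\|g_2\|_2^2$. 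Hence $\|f\|_b^2 = 4\|g_1\|_2^2 + 2\|g_2\|_2^2$, which is equivalent to $|||f|||_b^2 = \|g_1\|_2^2 + \|g_2\|_2^2$. The only nontrivial step is spotting the clean formula $f^+ = T_{\bar b}F$ from the commutation $T_{\bar a}T_{\bar b} = T_{\bar b}T_{\bar a}$; once this is in hand, the remaining argument reduces to routine manipulations on $\T$ using $|I|=1$ and the orthogonality $IH^2 \perp K_I$.
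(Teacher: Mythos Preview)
Your proof is correct and follows essentially the same approach as the paper: both use $\HH(b)=\MM(\bar a)$, push the $H^{2}$-decomposition $H^{2}=IH^{2}\oplus K_{I}$ through $T_{\bar a}$, compute $((1-I)g_{1})^{+}=-(1+I)g_{1}$ and $g_{2}^{+}=g_{2}$, and read off orthogonality and the norm identity $\|f\|_{b}^{2}=4\|g_{1}\|_{2}^{2}+2\|g_{2}\|_{2}^{2}$ from \eqref{Inconnu}. The one organisational difference is that you obtain $f^{+}=T_{\bar b}F$ in a single stroke via the commutation $T_{\bar a}T_{\bar b}=T_{\bar b}T_{\bar a}$ (valid since $\bar a,\bar b\in\overline{H^{\infty}}$), whereas the paper computes the two pieces separately using the boundary identity $\bar b\,a=-\bar a\,b$; these are equivalent bookkeeping choices rather than genuinely different arguments.
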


\begin{proof}
 (i) We have \(\HH(b)=\mathcal{M}(\bar{a})\) with equivalent norms, where \(a=\frac{1-I}{2}\) is the Pythago\-rean mate of \(b\). Also,
 \(
\frac{\bar{a}}{a}=\frac{1-\bar{I}}{1-I}=-\bar{I}
\)
a.e. on \(\T\), and thus \(T_{\bar{a}/a}=-T_{\bar{I}}\). Hence \(\ker T_{\bar{a}/a}=\ker T_{\bar{I}}=K_{I}\). Moreover, \(T_{a/\bar{a}}=-T_{I}\) has closed range, and thus 
\begin{align}
 H^{2}=\textrm{Ran}(T_{I})\stackrel{\perp}{\oplus}\ker(T_{I}^{*})=\textrm{Ran}(T_{I})\stackrel{\perp}{\oplus}\ker(T_{\bar{I}})
 &=T_{a/\bar{a}}H^{2}\stackrel{\perp}{\oplus}K_{I}\label{Eq 1}
\end{align}
(the sign $\stackrel{\perp}{\oplus}$ denotes here an orthogonal direct sum in $H^2$).
Using now the fact that \(T_{\bar{a}}\) is an isometry from \(H^{2}\) onto \(\mathcal M(\bar{a})=T_{\bar{a}}H^{2}\) (equipped with the range norm), applying \(T_{\bar{a}}\) to the equation (\ref{Eq 1}), and using the identity \(T_{\bar{a}}\,T_{a/\bar{a}}=T_{a}\), we obtain
\[
\HH(b)=\mathcal M(\bar{a})=\mathcal{M}(a)\stackrel{\perp}{\oplus}_{\,\bar{a}}T_{\bar{a}}\,K_{I},
\]
where the notation \(\stackrel{\perp}{\oplus}_{\,\bar{a}}\) represents an orthogonal direct sum with respect to the range norm on \(\mathcal M(\bar{a})\). Since 
\(T_{\bar{I}}\,K_{I}=\{0\}\) and \(\bar{a}=(1-\bar{I})/2\), we have \(T_{\bar{a}}\,K_{I}=(Id-T_{\bar{I}})\,K_{I}=K_{I}\), and so
\begin{equation}\label{Eq 2}
 \HH(b)=\mathcal M(\bar{a})=\mathcal{M}(a)\stackrel{\perp}{\oplus}_{\,\bar{a}}K_{I}=(1-I)H^{2}\stackrel{\perp}{\oplus}_{\,\bar{a}}K_{I}.
\end{equation}
It now remains to prove that the direct sum in this decomposition of \(\HH(b)\) is in fact orthogonal with respect to the \(\HH(b)\) norm.
\par\medskip
Let \(f\in H^{2}\) and \(g\in K_{I}\). Our aim is to show that \(\langle{(1-I)f},{g}\rangle_b=0\). Note that
\[
T_{\bar{b}}\,g=T_{(1+\bar{I})/2}\,g=\dfrac{1}{2}\,g=T_{\bar{a}}\,g
\]
from which it follows that 
\begin{equation}\label{Eq 3}
 g^{+}=g.
\end{equation}
Moreover, since \(\bar{b}\,a=-\bar{a}\,b\) a.e. on \(\T\), we have 
\[
T_{\bar{b}}\,\bigl ((1-I)f \bigr)=P_{+}\,(2\bar{b}\,af)=-P_{+}(2\bar{a}\,bf)=T_{\bar{a}}\,(-2bf),
\]
whence we get
\begin{equation}\label{Eq 4}
 \bigl ((1-I)f \bigr)^{+}=-2bf=-(1+I)f.
\end{equation}
By (\ref{Inconnu}), (\ref{Eq 3}) and (\ref{Eq 4}), it follows that
\begin{align*}
 \langle{(1-I)f},{g}\rangle_{b}&=\langle{(1-I)f},{g}\rangle_{2}-\langle{2bf},{g}\rangle_{2}
 =\langle{(1-I-2b)f},{g}\rangle_{2}
 =-2\,\langle{If},{g}\rangle_b=0
\end{align*}
because \(g\) belongs to \(K_{I}\).
\par\smallskip
(ii) Since \(\HH(b)=(1-I)H^{2}\stackrel{\perp}{\oplus}_{\,b}K_{I}\), we have
\[
||(1-I)g_{1}+g_{2}||_{b}^{2}=||(1-I)g_{1}||_{b}^{2}+||g_{2}||_{b}^{2},\qquad g_{1}\in H^2,\ g_{2}\in K_{I}.
\]
But observe that by (\ref{Inconnu}) and (\ref{Eq 4}) we have
\[
||(1-I)g_{1}||_{b}^{2}=||(1-I)g_{1}||_{2}^{2}+||(1+I)g_{1}||_{2}^{2}=4||g_{1}||_{2}^{2},
\]
while we get from (\ref{Inconnu}) and (\ref{Eq 3}) that \(||g_{2}||_{b}^{2}=2||g_{2}||_{2}^{2}\). Thus
\[
||(1-I)g_{1}+g_{2}||_{b}^{2}=4||g_{1}||_{2}^{2}+2||g_{2}||_{2}^{2},
\]
and from this the norm \(|||\,.\,|||_{b}\) is easily seen to be equivalent to \(||\,.\,||_{b}\).
\end{proof}

\par\smallskip
The next result is an analogue of \eqref{q} for the case where $b=(1+I)/2$ with respect to decomposition \eqref{decomposition2-hb}. 

 \begin{Lemma}\label{lem:existence-limite-radiale}
 Let $I$ be an inner function, $I\not\equiv 1$, and let $b=(1+I)/2$. Let $\zeta\in E_0(I)$ be such that $I(\zeta)=1$. Then $\zeta\in E_0(b)$. Moreover, if $f=(1-I)g_1+g_2$, $g_1\in H^2$ and $g_2\in K_I$, then $f(\zeta)=g_2(\zeta)$.
 \end{Lemma}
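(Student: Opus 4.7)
My plan is to handle the two claims separately, using throughout the characterisation recalled in Section~\ref{sec2}: a point $\zeta\in\T$ lies in $E_0(b)$ if and only if $b$ has an angular derivative in the sense of Carath\'eodory at $\zeta$, i.e. $b$ and $b'$ have non-tangential limits at $\zeta$ with $|b(\zeta)|=1$. The hypothesis $\zeta\in E_0(I)$ with $I(\zeta)=1$ tells me exactly that $I$ has such an ADC at $\zeta$, and this will be the main input for both parts.

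For the first claim, since $b=(1+I)/2$ and $b'=I'/2$, the ADC of $I$ at $\zeta$ immediately produces non-tangential limits $b(\zeta)=(1+I(\zeta))/2=1$ and $b'(\zeta)=I'(\zeta)/2$, with $|b(\zeta)|=1$. Hence $b$ has ADC at $\zeta$ and therefore $\zeta\in E_0(b)$. This step is essentially bookkeeping.

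For the formula $f(\zeta)=g_2(\zeta)$, I write $f(z)=(1-I(z))g_1(z)+g_2(z)$ and pass to the non-tangential limit. The term $g_2$ lies in $K_I=\HH(I)$, and since $\zeta\in E_0(I)$, the general theory recalled in Section~\ref{sec2} guarantees that $g_2$ has a non-tangential limit $g_2(\zeta)$ at $\zeta$. It remains to prove that $(1-I(z))g_1(z)\to 0$ non-tangentially. The ADC of $I$ at $\zeta$, via the Julia--Carath\'eodory theorem, gives a Lipschitz bound
\[
|1-I(z)|=|I(\zeta)-I(z)|\leq C\,|z-\zeta|
\]
along any fixed Stolz angle at $\zeta$; on such a Stolz angle one also has $|z-\zeta|\leq C'(1-|z|)$. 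Combining these with the standard pointwise estimate \eqref{gggbigggooooo} for $g_1\in H^2$ yields
\[
|(1-I(z))g_1(z)|\leq C C'\,\|g_1\|_2\,\dfrac{1-|z|}{\sqrt{1-|z|^{2}}}=C C'\,\|g_1\|_2\,\sqrt{\dfrac{1-|z|}{1+|z|}},
\]
which tends to $0$ as $z$ tends non-tangentially to $\zeta$. Adding the two limits gives $f(\zeta)=g_2(\zeta)$.

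I do not anticipate any real obstacle here; the only mildly delicate point is the Lipschitz estimate $|I(z)-I(\zeta)|\leq C|z-\zeta|$ in a Stolz angle, which is a classical consequence of Julia--Carath\'eodory and is exactly what the ADC hypothesis is designed to provide. Everything else is the decomposition \eqref{decomposition2-hb} combined with the standard $H^2$-growth bound and the boundary behaviour of functions in $K_I$ at points of $E_0(I)$.
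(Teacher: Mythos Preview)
Your proof is correct and follows essentially the same strategy as the paper: both show that $(1-I)g_1$ vanishes non-tangentially at $\zeta$ by combining a Lipschitz-type control on $1-I$ near $\zeta$ with the standard $H^2$ pointwise estimate \eqref{gggbigggooooo}, and both use $\zeta\in E_0(I)$ to handle the $g_2$ term. The only cosmetic difference is that you invoke Julia--Carath\'eodory directly for the bound $|1-I(z)|\le C|z-\zeta|$ in a Stolz angle, whereas the paper rewrites $(1-I(z))/(1-\overline{\zeta}z)$ as the kernel value $k_\zeta^I(z)=\langle k_\zeta^I,k_z^I\rangle_2$ and uses the weak convergence $k_z^I\to k_\zeta^I$ to see that this quotient stays bounded; these are two phrasings of the same fact.
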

 \begin{proof}
 As mentioned above, since $\zeta\in E_0(I)$ the function $g_2$ has a non-tangential limit at the point $\zeta$. Thus it remains to prove that $(1-I)g_1$ has a zero non-tangential limit at $\zeta$. To this purpose,  write for $z\in\mathbb D$
 \[
 \begin{aligned}
 (1-I(z))g_1(z)=&\frac{1-\overline{I(\zeta)}I(z)}{1-\overline{\zeta}z}(1-\overline{\zeta}z)g_1(z)\\
 =&k_{\zeta}^I(z) \,\overline{\zeta}\,(\zeta-z)g_1(z)\\
 =&\langle k_{\zeta}^I,k_{z}^I\rangle_2  \,\overline{\zeta}\,(\zeta-z)g_1(z).
 \end{aligned}
 \] 
 Now, since $\zeta\in E_0(I)$, $k_{z}^I$ tends weakly to $k_\zeta^I$ as $z$ tends to $\zeta$ non-tangentially. Hence 
 \[
 \lim_{\substack{z\to \zeta\\ \sphericalangle}}\langle k_{\zeta}^I,k_{z}^I\rangle_2=\|k_\zeta^I\|_2^2<\infty.
 \]
 Moreover, using the estimate \eqref{gggbigggooooo}, we obtain that
 \[
 \lim_{\substack{z\to \zeta\\ \sphericalangle}}(\zeta-z)g_1(z)=0,
 \]
from which it follows that
\[
 \lim_{\substack{z\to \zeta\\ \sphericalangle}}(1-I(z))g_1(z)=0.
\]
 \end{proof}
 In the case where $b=(1+I)/2$ and $I$ is an inner function with $I\not\equiv 1$, there is no complete characterisation of multipliers for $\HH(b)$. Nevertheless, we have at our disposal a sufficient condition which will be useful for our study of cyclicity. Before stating this result (Lemma \ref{Lem:multiplier-b-1+I}) on multipliers, we recall a well-known property of model spaces, of which we provide a proof for completeness's sake.
 
 \begin{Lemma}\label{Lem:model-space-KI.KIcontenu dans KI^2}
 Let $I$ be an inner function and let $f\in K_I$ and $g\in K_I\cap H^\infty$. Then $fg\in K_{I^2}$. 
 \end{Lemma}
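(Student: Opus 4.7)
The plan is to use the identity $K_J = H^2 \cap J\,\overline{zH^2}$ recalled in the excerpt (applied both with $J=I$ and with $J=I^2$), and to show directly that $fg$ lies in $H^2 \cap I^2\,\overline{zH^2}$.

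First, I would use the characterisation $K_I = H^2 \cap I\,\overline{zH^2}$ to represent $f$ and $g$ on the unit circle: there exist $h_1, h_2 \in H^2$ such that $f = I\bar z\,\overline{h_1}$ and $g = I\bar z\,\overline{h_2}$ a.e.\ on $\T$. The extra assumption $g \in H^\infty$ forces $|h_2| = |g|$ a.e.\ on $\T$, so $h_2$ is bounded on $\T$ and therefore lies in $H^\infty$ (using $H^2 \cap L^\infty(\T) = H^\infty$, which one can see e.g.\ from the Poisson representation).

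Next, multiplying the two boundary identities on $\T$ yields
\[
fg = I^2\,\bar z^2\,\overline{h_1 h_2} = I^2\,\bar z \cdot \overline{z\,h_1 h_2}.
\]
Since $h_1 \in H^2$ and $h_2 \in H^\infty$, the product $h_1 h_2$ belongs to $H^2$, and hence $z\,h_1 h_2 \in H^2$ as well. The identity above therefore exhibits $fg$ as an element of $I^2\,\overline{zH^2}$. Combined with the fact that $fg \in H^2$ (because $f \in H^2$ and $g \in H^\infty$), I would conclude $fg \in H^2 \cap I^2\,\overline{zH^2} = K_{I^2}$.

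There is no real obstacle here: once the characterisation of model spaces is in place, the argument reduces to a short boundary computation. The one substantive point is to notice that the hypothesis $g \in H^\infty$ is precisely what is needed to guarantee $h_1 h_2 \in H^2$; without this bounded\-ness, the product could fail to be square integrable and the identification with $K_{I^2}$ would break down.
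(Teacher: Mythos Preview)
Your proof is correct and is essentially identical to the paper's own argument: both use the characterisation $K_J=H^2\cap J\,\overline{zH^2}$, write $f=I\overline{z\widetilde f}$ and $g=I\overline{z\widetilde g}$ on $\T$, observe that $g\in H^\infty$ forces $\widetilde g\in H^\infty$, and multiply to get $fg=I^2\,\overline{z^2\widetilde f\widetilde g}\in H^2\cap I^2\,\overline{zH^2}=K_{I^2}$.
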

 
 \begin{proof}
Using that $K_I=H^2\cap I \overline{zH^2}$, write $f=I\overline{z\widetilde{f}}$ and $g=I\overline{z\widetilde{g}}$, with $\widetilde{f}, \widetilde{g}\in H^2$. Since $g\in H^\infty$, we indeed have $|\widetilde{g}|=|g|\in L^\infty(\mathbb T)$, and thus $\widetilde{g}\in H^\infty$. Moreover, $f g \in H^2$, and
 \[
 f g=I^2 \overline{z^2 \widetilde{f}\widetilde{g}},
 \]
 whence $f g\in H^2\cap I^2\overline{zH^2}=K_{I^2}$.
\end{proof}

 In the case where $b=(1+I)/2$, the de Branges-Rovnyak space $\mathcal H(b)$ contains a sequence of model spaces.
 \begin{Lemma}\label{Lem:sequence-model-spaces-contained-in-DBR}
Let $I$ be an inner function, $I\not\equiv 1$, and let $b=(1+I)/2$.  Then the following assertions hold:
\begin{enumerate}
\item the function $I$ is a multiplier of $\HH(b)$; 
\item for every $n\geq 1$, $K_{I^n}\subset \HH(b)$.
\end{enumerate} 
 \end{Lemma}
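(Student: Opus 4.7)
The strategy is to prove (1) directly using the orthogonal decomposition of $\HH(b)$ supplied by Lemma \ref{machin}(i), and then to bootstrap to (2) via the standard orthogonal chain decomposition of $K_{I^n}$ in $H^2$.

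For (1), take $f\in\HH(b)$ and write $f=(1-I)g_1+g_2$ with $g_1\in H^2$ and $g_2\in K_I$, as in Lemma \ref{machin}(i). Then
\[
If=(1-I)(Ig_1)+Ig_2,
\]
and the first summand already lies in $(1-I)H^2\subset\HH(b)$. For the second, observe that $Ig_2\in K_{I^2}$: indeed
\[
T_{\overline{I^2}}(Ig_2)=P_{+}(\bar I^{\,2}Ig_2)=P_{+}(\bar I g_2)=T_{\bar I}g_2=0,
\]
since $g_2\in K_I=\ker T_{\bar I}$. The standard $H^2$-orthogonal splitting $K_{I^2}=K_I\oplus IK_I$ then gives $Ig_2=\alpha_0+I\alpha_1$ with $\alpha_0,\alpha_1\in K_I$. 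Using the identity $I\alpha_1=\alpha_1-(1-I)\alpha_1$, we rearrange
\[
If=(1-I)\bigl(Ig_1-\alpha_1\bigr)+(\alpha_0+\alpha_1)\in(1-I)H^2\stackrel{\perp}{\oplus}_b K_I=\HH(b).
\]
Boundedness of $M_I$ on $\HH(b)$ is then automatic by the closed graph theorem, so $I\in\mathfrak M(\HH(b))$.

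For (2), we proceed by induction on $n$. The case $n=1$ is already contained in Lemma \ref{machin}(i). Assuming $K_{I^n}\subset\HH(b)$, any $f\in K_{I^{n+1}}$ admits an $H^2$-orthogonal decomposition $f=f_0+I\tilde g$ with $f_0\in K_I$ and $\tilde g\in K_{I^n}$, coming from the well-known identity $K_{I^{n+1}}=K_I\stackrel{\perp}{\oplus}IK_{I^n}$. By the induction hypothesis $\tilde g\in\HH(b)$; by (1) we then have $I\tilde g\in\HH(b)$; and $f_0\in K_I\subset\HH(b)$ by the base case. Hence $f\in\HH(b)$, as desired.

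The only non-routine step is (1). The key observation there is that $Ig_2\in K_{I^2}$; once this is noticed, the further splitting $K_{I^2}=K_I\oplus IK_I$ together with the identity $I\alpha_1=\alpha_1-(1-I)\alpha_1$ lets the ``overflow'' be absorbed into the two components of the decomposition of $\HH(b)$ in Lemma \ref{machin}(i). I do not anticipate any real obstacle beyond this reduction.
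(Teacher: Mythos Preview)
Your proof is correct and follows essentially the same approach as the paper. The only difference is in part (1): the paper handles the term $Ig_2$ directly via the identity $Ig_2=g_2-(1-I)g_2$, giving $If=(1-I)(Ig_1-g_2)+g_2$ in one step; your detour through $K_{I^2}$ works but is unnecessary, since $Ig_2$ already lies in $IK_I$, so in your notation $\alpha_0=0$ and $\alpha_1=g_2$.
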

 
 \begin{proof}
 (a): Let $f\in\HH(b)$. According to \eqref{decomposition2-hb}, we can decompose $f$ as $f=(1-I)g_1+g_2$ with $g_1\in H^2$ and $g_2\in K_I$. Then
 \[
 If=(1-I)(Ig_1)+I g_2=(1-I)(Ig_1-g_2)+g_2
 \]
 and $Ig_1-g_2\in H^2$ and $g_2\in K_I$. Thus, using one more time \eqref{decomposition2-hb}, it follows that  $If\in\HH(b)$.
 
 (b): We argue by induction. For $n=1$, the property follows from Lemma~\ref{machin}. Assume that for some $n\geq 1$, $K_{I^n}\subset \HH(b)$. It is known that $K_{I^{n+1}}=K_I\oplus I K_{I^n}$. See \cite[Lemma 5.10]{MR3526203}. The conclusion now follows from the induction assumption and (a).
\end{proof}

 Here is now our sufficient condition for $f\in\HH(b)$ to be a multiplier of $\HH(b)$.
 
 \begin{Lemma}\label{Lem:multiplier-b-1+I}
 Let $I$ be an inner function, $I\not\equiv 1$, and let $b=(1+I)/2$. Assume that $f$ decomposes as $f=(1-I)g_1+g_2$, with $g_1\in H^\infty$ and $g_2\in H^\infty\cap K_I$. Then $f\in\mathfrak M(\HH(b))$. 
 \end{Lemma}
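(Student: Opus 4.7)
The plan is to exploit the linearity of the multiplier space and handle each piece of the decomposition $f=(1-I)g_1+g_2$ separately, reducing everything to facts already established in Lemmas \ref{machin}, \ref{Lem:model-space-KI.KIcontenu dans KI^2} and \ref{Lem:sequence-model-spaces-contained-in-DBR}. Concretely, I would show that both $(1-I)g_1$ and $g_2$ belong to $\mathfrak{M}(\HH(b))$; once $fh\in\HH(b)$ is known for every $h\in\HH(b)$, the closed graph theorem provides automatically the boundedness of $M_f$ on $\HH(b)$.

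For the first piece, pick any $h\in\HH(b)$. Since $\HH(b)$ is contractively contained in $H^2$, we have $h\in H^2$, and then $g_1 h\in H^2$ because $g_1\in H^\infty$. Consequently
\[
(1-I)g_1\,h \;=\; (1-I)(g_1h)\;\in\;(1-I)H^2\;\subset\;\HH(b),
\]
where the last inclusion is Lemma \ref{machin}(i). This shows $(1-I)g_1\in\mathfrak{M}(\HH(b))$.

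For the second piece, take again $h\in\HH(b)$ and apply Lemma \ref{machin}(i) to decompose it as $h=(1-I)h_1+h_2$ with $h_1\in H^2$ and $h_2\in K_I$. Then
\[
g_2\,h \;=\; (1-I)\bigl(g_2h_1\bigr) \;+\; g_2h_2.
\]
Since $g_2\in H^\infty$, $g_2 h_1\in H^2$, so the first summand is in $(1-I)H^2\subset\HH(b)$ by Lemma \ref{machin}(i). For the second summand, apply Lemma \ref{Lem:model-space-KI.KIcontenu dans KI^2} with $f=h_2\in K_I$ and $g=g_2\in K_I\cap H^\infty$ to deduce $g_2h_2\in K_{I^2}$, and then Lemma \ref{Lem:sequence-model-spaces-contained-in-DBR}(b) gives $K_{I^2}\subset\HH(b)$. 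Hence $g_2h\in\HH(b)$, and $g_2\in\mathfrak{M}(\HH(b))$.

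There is no real obstacle here: the potentially tricky part is that the product $g_2h_2$ does not land in $K_I$ but only in the larger model space $K_{I^2}$, which is precisely what motivates invoking Lemma \ref{Lem:sequence-model-spaces-contained-in-DBR}(b). Combining the two pieces by linearity gives $f\in\mathfrak{M}(\HH(b))$, and a final appeal to the closed graph theorem (as recalled just before the statement) converts the inclusion $fh\in\HH(b)$ into the boundedness of $M_f$.
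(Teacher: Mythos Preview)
Your proof is correct and follows essentially the same approach as the paper's: both arguments reduce the problem to showing that the cross term $g_2 h_2$ (with $h_2\in K_I$) lands in $K_{I^2}\subset\HH(b)$ via Lemmas~\ref{Lem:model-space-KI.KIcontenu dans KI^2} and~\ref{Lem:sequence-model-spaces-contained-in-DBR}, while the remaining terms fall into $(1-I)H^2$. The only cosmetic difference is that you split $f$ first and then decompose $h\in\HH(b)$, whereas the paper decomposes the test function $\varphi\in\HH(b)$ first and then uses the decomposition of $f$ on the $K_I$-component.
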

 
 \begin{proof}
 We need to show that for every $\varphi\in\HH(b)$, we have $\varphi f\in\HH(b)$. According to \eqref{decomposition2-hb}, write $\varphi=(1-I)\varphi_1+\varphi_2$, with $\varphi_1\in H^2$ and $\varphi_2\in K_I$. Then
 \[
 \varphi f=(1-I)\varphi_1 f+\varphi_2 f.
 \]
 Since $f\in H^\infty$, $\varphi_1 f\in H^2$, and so the first term $(1-I)\varphi _1 f$ belongs to $(1-I)H^2$ which is contained in $\HH(b)$. Thus it remains to prove that $\varphi_2 f\in\HH(b)$. In order to deal with this term, write
 \[
 \varphi_2 f=(1-I)\varphi_2 g_1+g_2\varphi_2,
 \]
 and as before, since $g_1\in H^\infty$, the term $(1-I)\varphi _2 g_1$ belongs to $(1-I)H^2$, and so to $\HH(b)$. It remains to prove that $g_2\varphi_2\in\HH(b)$. Lemma~\ref{Lem:model-space-KI.KIcontenu dans KI^2} implies that $g_2\varphi_2\in K_{I^2}$, and the conclusion follows now directly from Lemma~\ref{Lem:sequence-model-spaces-contained-in-DBR}.
 
 \end{proof}

\section{Some basic facts on cyclic vectors for the shift operator}\label{sec3}
Recall that if $T$ is a bounded operator on a Hilbert space $\HH$, then a vector $f\in\HH$ is said to be cyclic for $T$ if the linear span of the orbit of $f$ under the action of $T$ is dense in $\HH$, i.e. if
\[
\mbox{Span}(T^n f:n\geq 0)=\overline{\{p(T)f:p\in\mathbb C[X]\}}=\HH.
\] 
When $T=S_b$ is the shift operator on $\HH(b)$, we have $p(S_b)f=pf$ for every $f\in\HH(b)$ and every polynomial $p\in\C[X]$. Thus a function $f\in\HH(b)$ is cyclic for $S_b$ if and only if 
\[
\overline{\{pf:p\in\mathbb C[X]\}}=\HH(b).
\]
In fact, it is sufficient to approximate the constant function $1$ by elements of the form $pf$, $p\in\C[X]$, to get that $f$ is cyclic for $S_b$.

\begin{Lemma}\label{Lem:cyclicite-constant1}
Let $b$ a non-extreme point in $\operatorname{ball}(H^{\infty})$ and $f\in\HH(b)$. Then the following assertions are equivalent:
\begin{enumerate}
\item $f$ is cyclic for $S_b$.
\item There exists a sequence of polynomials $(p_n)_n$ such that 
\[
\|p_nf-1\|_b\to 0,\mbox{ as }n\to \infty.
\]
\end{enumerate}
\end{Lemma}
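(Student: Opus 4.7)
The statement is an equivalence of which one direction is essentially a triviality and the other is an easy but slightly more substantial consequence of the density of polynomials in $\HH(b)$ (equation \eqref{eq:density-polynomial}). The overall strategy is to reduce cyclicity of $f$ — a priori a statement about approximating every element of $\HH(b)$ — to the single approximation of the constant function $1$ by polynomial multiples of $f$, using the $S_b$-invariance of the closed subspace generated by $f$.

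For the direction (a) $\Rightarrow$ (b), I would simply observe that if $f$ is cyclic for $S_b$, then by definition the closed linear span of $\{S_b^n f : n\ge 0\} = \{z^n f : n \ge 0\}$ equals $\HH(b)$, which coincides with the closure of $\{pf : p \in \mathbb{C}[X]\}$. Since $1 \in \HH(b)$, we can find a sequence of polynomials $(p_n)$ with $\|p_nf - 1\|_b \to 0$.

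For the converse (b) $\Rightarrow$ (a), the plan is to introduce the closed subspace
\[
E_f := \overline{\{pf : p \in \mathbb{C}[X]\}} \subset \HH(b),
\]
and to show that $E_f = \HH(b)$. The key observation is that $E_f$ is $S_b$-invariant: indeed, $\{pf : p \in \mathbb{C}[X]\}$ is clearly invariant under $S_b$ (since $S_b(pf) = (zp)f$ is still of this form), and because $S_b$ is bounded on $\HH(b)$, its invariance passes to the closure $E_f$. Assumption (b) gives $1 \in E_f$, and applying $S_b^n$ yields $z^n \in E_f$ for every $n \ge 0$. Finally, invoking \eqref{eq:density-polynomial}, the closed linear span of $\{z^n : n \ge 0\}$ is all of $\HH(b)$, forcing $E_f = \HH(b)$, i.e.\ $f$ is cyclic for $S_b$.

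There is no serious obstacle here: the only ingredient beyond bookkeeping is the density of polynomials in $\HH(b)$, which has already been recorded as \eqref{eq:density-polynomial} and is a standard fact about non-extreme $b$. The argument is essentially the general principle that cyclicity for a multiplication operator on a reproducing kernel Hilbert space in which the constants (or any fixed cyclic vector for $S_b$) are dense reduces to approximating that one vector.
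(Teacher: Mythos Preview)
Your proof is correct and follows exactly the approach the paper takes: the paper's proof is the single sentence ``Follows immediately from the density of polynomials in $\HH(b)$ and the boundedness of $S_b$,'' and your argument is precisely the unpacking of that sentence.
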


\begin{proof}
Follows immediately from the density of polynomials in $\HH(b)$ and the boundedness of $S_b$. 
\end{proof}

The general meaning of our next result is that the set of zeros of a cyclic vector $f\in\HH(b)$ for $S_b$ cannot be too large.  
\begin{Lemma}\label{Lem1:CS}
Let $b$ a non-extreme point in $\operatorname{ball}(H^{\infty})$ and $f\in\HH(b)$. Assume that $f$ is cyclic for $S_b$. Then we have the following properties:
\begin{enumerate}
\item $f$ is outer;
\item for every $\zeta\in E_0(b)$, $f(\zeta)\neq 0$. 
\end{enumerate}
\end{Lemma}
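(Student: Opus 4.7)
The plan is to use, for both parts, the existence of a sequence of polynomials $(p_n)_n$ with $p_nf\to 1$ in $\HH(b)$, provided by Lemma \ref{Lem:cyclicite-constant1}.

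For (a), I will exploit the fact that $\HH(b)$ is contractively contained in $H^2$, so that $p_nf\to 1$ in $H^2$ as well. Let $f=I_fO_f$ be the inner–outer factorisation of $f$ viewed as an element of $H^2$. Then for every $n$, $p_nf=I_f(p_nO_f)$ belongs to the closed subspace $I_fH^2$ of $H^2$. Passing to the limit in $H^2$, we obtain $1\in I_fH^2$, which forces the inner factor $I_f$ to be a unimodular constant, i.e. $f$ is outer.

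For (b), I will use the discussion preceding Lemma \ref{Lem:completenes}: for $\zeta\in E_0(b)$, the non-tangential evaluation functional $g\mapsto g(\zeta)$ is bounded on $\HH(b)$, being represented by the kernel $k_{\zeta}^{b}$. Hence from $p_nf\to 1$ in $\HH(b)$ we deduce $(p_nf)(\zeta)\to 1$. Since $p_n$ is a polynomial, $p_n$ extends continuously to $\overline{\mathbb D}$, and thus the non-tangential limit of $p_nf$ at $\zeta$ equals $p_n(\zeta)f(\zeta)$. If we had $f(\zeta)=0$, this would give $p_n(\zeta)f(\zeta)=0$ for every $n$, contradicting the convergence to $1$. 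Therefore $f(\zeta)\neq 0$ for every $\zeta\in E_0(b)$.

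There is no real obstacle in either part; the only delicate point is the identification of the non-tangential boundary value of a product $p_nf$ with $p_n(\zeta)f(\zeta)$, which is immediate from the continuity of the polynomial factor at $\zeta$ and the existence of $f(\zeta)$ as a non-tangential limit guaranteed by $\zeta\in E_0(b)$.
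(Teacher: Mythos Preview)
Your proof is correct and follows essentially the same approach as the paper's. The only cosmetic difference is in part (a): the paper concludes from $p_nf\to 1$ in $H^2$ that $f$ is cyclic for the shift on $H^2$ and then invokes Beurling's theorem, whereas you unpack this step directly via the inner--outer factorisation and the closedness of $I_fH^2$; part (b) is identical to the paper's argument.
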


\begin{proof}
(a) Since $f$ is cyclic for $S_b$, there exists a sequence of polynomials $(p_n)_n$ such that 
\begin{equation}\label{eq-cyclicity-1-approximated}
\|p_nf-1\|_b\to 0,\mbox{ as }n\to \infty.
\end{equation}
Now, using the fact that $\HH(b)$ is contractively contained into $H^2$, we get that 
\[
\|p_nf-1\|_2\to 0,\mbox{ as }n\to \infty.
\]
That proves that $f$ is cyclic for $S$ in $H^2$, and so $f$ is outer by Beurling's theorem. 

\par\smallskip

(b) Since the functional $f\longmapsto f(\zeta)$ is bounded on $\HH(b)$ for every $\zeta\in E_0(b)$, we deduce  from \eqref{eq-cyclicity-1-approximated} that
\[
|p_n(\zeta)f(\zeta)-1|\to 0,\mbox{ as }n\to \infty
\]
for every $\zeta\in E_0(b)$.
This property implies directly that $f(\zeta)\neq 0$. 
\end{proof}

We will encounter in the sequel of the paper some situations where the converse of Lemma~\ref{Lem1:CS} is also true, i.e. where conditions (a) and (b) of Lemma \ref{Lem1:CS} give a necessary and sufficient condition for a function $f\in\HH(b)$ to be cyclic.
\par\smallskip
We now provide some elementary results concerning cyclic functions for $S_b$.

\begin{Lemma}\label{lem:multi-inver}
Let $b$ a non-extreme point in $\operatorname{ball}(H^{\infty})$. Suppose that $f\in\mathfrak M(\HH(b))$ and that $1/f\in \HH(b)$. Then $f$ is cyclic for $S_b$. 
\end{Lemma}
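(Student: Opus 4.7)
The plan is very short because essentially all the ingredients are already at hand. Since $f\in\mathfrak M(\HH(b))$, the closed graph theorem (as recalled in Section~\ref{sec2}) yields that the multiplication operator $M_f$ is bounded on $\HH(b)$. The strategy is to use this boundedness together with the density of polynomials to manufacture the approximation of the constant $1$ that Lemma~\ref{Lem:cyclicite-constant1} requires.

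First, since $1/f\in\HH(b)$ and polynomials are dense in $\HH(b)$ by \eqref{eq:density-polynomial}, I would pick a sequence of polynomials $(p_n)_n$ with $\|p_n-1/f\|_b\to 0$. Next, I would apply $M_f$ to this approximation: by boundedness,
\[
\|M_f p_n - M_f(1/f)\|_b \le \|M_f\|_{\mathcal L(\HH(b))}\,\|p_n-1/f\|_b \longrightarrow 0.
\]
Since $1/f\in\HH(b)$ and $f\in\mathfrak M(\HH(b))$, the product $f\cdot(1/f)$ is well defined in $\HH(b)$ and equals the constant function $1$, so $M_f(1/f)=1$. Thus $\|p_n f-1\|_b\to 0$, and Lemma~\ref{Lem:cyclicite-constant1} immediately concludes that $f$ is cyclic for $S_b$.

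There is no serious obstacle here: the only small point to check is that the identification $M_f(1/f)=1$ is legitimate, which follows because $\mathfrak M(\HH(b))\subset H^\infty\cap\HH(b)$ so $f$ is an honest bounded holomorphic function on $\D$ and the pointwise product $f(z)\cdot f(z)^{-1}=1$ agrees with the image under $M_f$.
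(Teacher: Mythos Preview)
Your proof is correct and follows essentially the same approach as the paper: approximate $1/f$ by polynomials using density, then use the boundedness of $M_f$ to obtain $\|p_n f-1\|_b\to 0$, and conclude via Lemma~\ref{Lem:cyclicite-constant1}. The extra remark justifying $M_f(1/f)=1$ is a harmless clarification.
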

\begin{proof}
Using \eqref{eq:density-polynomial}, we see that there exists a sequence of polynomials $(p_n)_n$ such that 
\[
\|p_n-f^{-1}\|_b\to 0,\mbox{ as }n\to\infty. 
\]
Now, since $f\in\mathfrak M(\HH(b))$, the multiplication operator by $f$ is bounded on $\HH(b)$, and thus we get that 
\[
\|p_nf-1\|_b\to 0,\mbox{ as }n\to \infty,
\]
which by Lemma~\ref{Lem:cyclicite-constant1} implies that $f$ is cyclic for $S_b$.
\end{proof}

In the following result, the set $\mbox{Hol}(\overline{\mathbb D})$ denotes the space of analytic functions in a neighborhood of the closed unit disc $\overline{\mathbb D}$.
\begin{Corollary}\label{cor:cyclicity-holvoisinagedeDbar}
Let $b$ a non-extreme point in $\operatorname{ball}(H^{\infty})$. Let $f\in \mbox{Hol}(\overline{\mathbb D})$ and assume that $\inf_{\overline{\mathbb D}}|f|>0$. Then $f$ is cyclic for $S_b$. 
\end{Corollary}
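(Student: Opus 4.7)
The plan is to invoke Lemma~\ref{lem:multi-inver}, which reduces the problem to showing $f\in\mathfrak{M}(\HH(b))$ and $1/f\in\HH(b)$. Since $f$ is holomorphic on some disc $\{|z|<r\}$ with $r>1$ and $\inf_{\overline{\mathbb{D}}}|f|>0$, continuity of $f$ on the compact set $\overline{\mathbb{D}}$ ensures that $|f|$ stays bounded below on $\{|z|\le r'\}$ for some $1<r'<r$; hence $1/f$ is holomorphic on $\{|z|<r'\}$, so $1/f\in\mbox{Hol}(\overline{\mathbb{D}})$ as well. It therefore suffices to establish the general assertion that every $\varphi\in\mbox{Hol}(\overline{\mathbb{D}})$ is a multiplier of $\HH(b)$, since then $\varphi=M_\varphi\cdot 1\in\HH(b)$ automatically.

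For this assertion I would use the spectral bound $\sigma(S_b)\subseteq\overline{\mathbb{D}}$, equivalently $r(S_b)\le 1$, which is a standard property of multiplication by $z$ on a reproducing kernel Hilbert space of analytic functions on $\mathbb{D}$ containing a dense set of polynomials. Granting this, for every $\rho>1$ there exists $C_\rho>0$ with $\|S_b^n\|\le C_\rho\rho^n$ for all $n\ge 0$. A function $\varphi=\sum_{n\ge 0}\varphi_n z^n\in\mbox{Hol}(\overline{\mathbb{D}})$ has $|\varphi_n|\le MR^{-n}$ for some $R>1$, and choosing $\rho\in(1,R)$ makes the series $T:=\sum_{n\ge 0}\varphi_n S_b^n$ norm-convergent in $\mathcal{L}(\HH(b))$. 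Since $Tp=\varphi p$ on polynomials and polynomials are dense in $\HH(b)$, continuity of pointwise evaluation on $\HH(b)$ gives $Tg=\varphi g$ for every $g\in\HH(b)$, so $\varphi\in\mathfrak{M}(\HH(b))$ with $M_\varphi=T$.

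Applying this to $\varphi=f$ and to $\varphi=1/f$ yields both hypotheses of Lemma~\ref{lem:multi-inver}, whence $f$ is cyclic for $S_b$. The main obstacle in this plan is the spectral bound $r(S_b)\le 1$: one can establish it by showing that $S_b-\lambda I$ is invertible for every $|\lambda|>1$, combining a Neumann series on the region $|\lambda|>\|S_b\|$ with an analytic continuation of the resolvent (or, equivalently, a direct proof that $1/(z-\lambda)$ multiplies $\HH(b)$ into itself) down to $|\lambda|>1$. Once this input is in place, the remainder of the argument is routine.
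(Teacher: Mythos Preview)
Your overall strategy coincides with the paper's: both apply Lemma~\ref{lem:multi-inver} after observing that $f$ and $1/f$ lie in $\mbox{Hol}(\overline{\mathbb{D}})$, and both need the inclusion $\mbox{Hol}(\overline{\mathbb{D}})\subset\mathfrak{M}(\HH(b))$. The paper simply quotes this inclusion from \cite[Theorem 24.6]{FM2}; you instead try to derive it from the spectral bound $r(S_b)\le 1$ via the holomorphic functional calculus. That reduction is correct, and your power-series argument (convergence of $\sum\varphi_nS_b^n$, density of polynomials, continuity of point evaluations) is sound.

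The gap is in your justification of $r(S_b)\le 1$ itself. The ``analytic continuation of the resolvent'' suggestion is not a proof: the resolvent $\lambda\mapsto(S_b-\lambda)^{-1}$ is only defined on the resolvent set, and one cannot extend it across points of $\{1<|\lambda|\le\|S_b\|\}$ without first knowing those points are not in the spectrum, which is exactly the assertion at stake. Your alternative, a direct proof that $1/(z-\lambda)$ multiplies $\HH(b)$ for $|\lambda|>1$, would indeed yield the bound, but you do not carry it out, and it is not a triviality; it is essentially the content of the theorem the paper cites. So as written, your argument is correct once the input $r(S_b)\le 1$ (equivalently, \cite[Theorem~24.6]{FM2}) is supplied externally, which is precisely what the paper does.
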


\begin{proof}
When $b$ is a non-extreme point in $\operatorname{ball}(H^{\infty})$, we have $\mbox{Hol}(\overline{\mathbb D})\subset \mathfrak M(\HH(b))$. See \cite[Theorem 24.6]{FM2}. 
Hence $f\in  \mathfrak M(\HH(b))$.  Moreover, the conditions on $f$ also imply that $1/f \in \mbox{Hol}(\overline{\mathbb D})$. In particular, $1/f\in\HH(b)$. It remains to apply Lemma~\ref{lem:multi-inver} in order to get that $f$ is cyclic.
\end{proof}

\begin{Lemma}\label{Lem:product-multiplicateur}
Let $f_1,f_2\in\mathfrak M(\HH(b))$. Then the following assertions are equivalent:
\begin{enumerate}
\item the product function $f_1f_2$ is cyclic for $S_b$;
\item each of the functions $f_1$ and $f_2$ is cyclic for $S_b$.
\end{enumerate}
\end{Lemma}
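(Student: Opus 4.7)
The natural plan is to work with the closed $S_b$-invariant subspace $[g]:=\overline{\{pg:p\in\mathbb C[X]\}}$ generated by a function $g\in\HH(b)$; recall that $f$ is cyclic for $S_b$ exactly when $[f]=\HH(b)$. The whole argument hinges on one simple observation: if $h\in\mathfrak M(\HH(b))$ and $g\in\HH(b)$, then $gh\in[h]$. Indeed, by \eqref{eq:density-polynomial} there are polynomials $q_n$ with $q_n\to g$ in $\HH(b)$, and since $M_h$ is bounded on $\HH(b)$ we have $q_n h\to gh$ in $\HH(b)$, so $gh$ lies in the closure $[h]$.

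For the implication (a)$\Rightarrow$(b), I would apply this observation with $g=f_2\in\HH(b)$ (note that $f_2=f_2\cdot 1$ belongs to $\HH(b)$ because $f_2$ is a multiplier) and $h=f_1$. This gives $f_1f_2\in[f_1]$, and since $[f_1]$ is a closed $S_b$-invariant subspace containing $f_1f_2$, the minimality of $[f_1f_2]$ yields $[f_1f_2]\subset[f_1]$. If $f_1f_2$ is cyclic, then $[f_1]=\HH(b)$, so $f_1$ is cyclic; interchanging the roles of $f_1$ and $f_2$ (using $f_1f_2=f_2f_1$) gives the cyclicity of $f_2$.

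For (b)$\Rightarrow$(a), Lemma~\ref{Lem:cyclicite-constant1} furnishes polynomials $p_n$ with $\|p_nf_1-1\|_b\to 0$. The boundedness of $M_{f_2}$ on $\HH(b)$ yields $\|p_nf_1f_2-f_2\|_b\to 0$, hence $f_2\in[f_1f_2]$. Since $[f_1f_2]$ is $S_b$-invariant it contains $qf_2$ for every polynomial $q$; picking polynomials $q_n$ with $q_nf_2\to 1$ (which exist because $f_2$ is cyclic) and using that $[f_1f_2]$ is closed, we obtain $1\in[f_1f_2]$. Then \eqref{eq:density-polynomial} gives $[f_1f_2]=\HH(b)$, i.e.\ $f_1f_2$ is cyclic.

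There is no real obstacle here, and the proof needs no new machinery beyond what has already been collected: the only subtle point is that one cannot freely multiply two arbitrary elements of $\HH(b)$ (such a product need not even lie in $\HH(b)$), and the multiplier hypothesis on $f_1$ and $f_2$ is precisely what allows one to push approximations of one factor through the other via the bounded operators $M_{f_1}$ and $M_{f_2}$.
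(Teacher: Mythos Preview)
Your proof is correct and follows essentially the same approach as the paper's: both directions rely on the density of polynomials in $\HH(b)$ together with the boundedness of the multiplication operators $M_{f_1}$ and $M_{f_2}$. The only cosmetic difference is that you package the argument in terms of the closed invariant subspaces $[g]$, whereas the paper writes out explicit $\varepsilon$-estimates to produce the approximating polynomials directly.
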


\begin{proof}
$(a)\implies (b)$: Assume that $f_1f_2$ is cyclic. By symmetry, it suffices to prove that $f_1$ is cyclic. Let $\varepsilon>0$. There exists a polynomial $q$ such that $\|qf_1f_2-1\|_b\leq \epsilon$. Now since the polynomials are dense in $\HH(b)$, we can also find a polynomial $p$ such that $$\|f_2q-p\|_b\leq \frac{\varepsilon}{\|f_1\|_{\mathfrak M(\HH(b))}}\cdot$$ Thus
\[
\begin{aligned}
\|pf_1-1\|_b\leq & \|pf_1-f_1f_2q\|_b+\|f_1f_2q-1\|_b
\leq \|f_1\|_{\mathfrak M(\HH(b))} \|p-f_2q\|_b+\varepsilon
\leq 2\varepsilon,
\end{aligned}
\]
which proves that $f_1$ is cyclic.

$(b)\implies (a)$: Assume that $f_1$ and $f_2$ are cyclic for $\HH(b)$. Let $\varepsilon>0$. There exists a polynomial $p$ such that $\|pf_1-1\|_b\leq \epsilon$. On the other hand, there is also a polynomial $q$ such that $$\|qf_2-1\|_b\leq \frac{\varepsilon}{\|pf_1\|_{\mathfrak M(\HH(b))}}\cdot$$ Now we have 
\[
\begin{aligned}
\|pqf_1f_2-1\|_b\leq & \|pqf_1f_2-pf_1\|_b+\|pf_1-1\|_b
&\leq \|pf_1\|_{\mathfrak M(\HH(b))} \|qf_2-1\|_b+\varepsilon
&\leq 2\varepsilon.
\end{aligned}
\]
Hence the function $f_1f_2$ is cyclic. 
\end{proof}

Our next result is motivated by the Brown--Shields conjecture and the work \cite{MR3475457} for Dirichlet type spaces $\mathcal D(\mu)$. Indeed, 
let $\mu$ be a positive finite measure on $\T$, and let $\mathcal D(\mu)$ be the associated Dirichlet space (i.e. the space of holomorphic functions on $\D$ whose derivatives are square-integrable when weighted against the Poisson integral of the measure $\mu$). It is shown in \cite{MR3110499}, \cite{MR3390195} that in some cases, Dirichlet spaces and de Branges-Rovnyak spaces are connected. More precisely, let $b\in \textrm{ball}(H^{\infty})$ be a rational function (which is not a finite Blaschke product), and let $a$ be its pythagorean mate. Let also $\mu$ be a positive finite measure on $\T$. Then $\mathcal D(\mu)=\HH(b)$ with equivalent norms if and only if the zeros of $a$ on $\T$ are all simple, and the support of $\mu$ is exactly the set of these zeros \cite{MR3110499}.
In the context of Dirichlet spaces, the authors of \cite{MR3475457}
prove the Brown--Shields conjecture when the measure $\mu$ has countable support, using two notions of capacity (which they denote $c_{\mu}(F)$ and $c_{\mu}^a(F)$ respectively) and showing that they are comparable: $c_{\mu}(F)\le c_{\mu}^a(F)\le 4 \,c_{\mu}(F)$ for every $F\subset \mathbb T$ (\cite[Lemma 3.1]{MR3475457}).
In the same spirit, we introduce the following notions of {capacity} in $\HH(b)$-spaces. For a set $F\subset \mathbb T$, we define  $c_1(F)$ and $c_2(F)$ as
\[
c_1(F)=\inf\{\|f\|_b: f\in\HH(b),\,|f|\geq 1\mbox{ a.e. on a neighborhood of }F\},
\]
and 
\[
c_2(F)=\inf\{\|f\|_b: f\in\HH(b),\,|f|=1\mbox{ a.e. on a neighborhood of }F\}.
\]
Observe  that $c_1(F)\leq c_2(F)$. We do not know if $c_1(F)$ and $c_2(F)$ are comparable in general in our context of de Branges-Rovnyak spaces.
\par\smallskip
%

Our next result should be compared to \cite[Lemma 3.2]{MR3475457}.

\begin{Theorem}\label{Thm:capacity}
Let $b$ a non-extreme point in $\operatorname{ball}(H^{\infty})$ and $\zeta\in\mathbb T$.
Consider the following assertions:
\begin{enumerate}
\item $z-\zeta$ is not cyclic for $S_b$;
\item $\zeta\in E_0(b)$;
\item $c_1(\{\zeta\})>0$;
\item $c_2(\{\zeta\})>0$.
\end{enumerate}
Then $(a)\iff(b)$, $(b)\implies (d)$ and $(c)\implies (a)$.
\end{Theorem}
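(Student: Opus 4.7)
My plan is to handle the three implications separately.

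For $(a)\iff(b)$, both directions are operator-theoretic. The implication $(b)\Rightarrow(a)$ will follow from Lemma~\ref{Lem1:CS}(b) applied to $f=z-\zeta$: since $f$ vanishes at $\zeta$ and a cyclic vector must be non-zero at every point of $E_0(b)$, no $\zeta\in E_0(b)$ can make $z-\zeta$ cyclic. Conversely, by density of polynomials in $\HH(b)$ (equation~\eqref{eq:density-polynomial}) and boundedness of the multiplier $M_{z-\zeta}=S_b-\zeta I$ on $\HH(b)$, cyclicity of $z-\zeta$ is equivalent to the range of $S_b-\zeta I$ being dense in $\HH(b)$, hence to $\ker(S_b^*-\bar\zeta I)=\{0\}$. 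By the characterisation \eqref{point-spectrum-boundary}, this is equivalent to $\zeta\notin E_0(b)$.

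For $(b)\Rightarrow(d)$, I take $f\in\HH(b)$ with $|f|=1$ a.e.\ on a neighborhood $V$ of $\zeta$ and aim to bound $\|f\|_b$ below by a positive constant independent of $f$. Factorising $f=IF$ into its inner and outer parts, the condition $|F|=|f|=1$ a.e.\ on $V$ combined with the Poisson representation of $\log|F|$ shows that $F$ extends analytically across $V$, so $|F(\zeta)|=1$. A further argument, using that $f\in\HH(b)$ and $\zeta\in E_0(b)$ prevent the inner factor of $f$ from having a singular component or an accumulation of zeros at $\zeta$, will yield $|I(\zeta)|=1$ at the non-tangential limit, and hence $|f(\zeta)|=1$. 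Since $\zeta\in E_0(b)$, the point evaluation at $\zeta$ is realised by $k_\zeta^b$, and one gets $1=|f(\zeta)|\le\|f\|_b\,\|k_\zeta^b\|_b$, so $c_2(\{\zeta\})\ge 1/\|k_\zeta^b\|_b>0$. The hardest step is justifying $|I(\zeta)|=1$; this is the main obstacle of the whole proof, and it requires structural control of the inner factors of elements of $\HH(b)$ near $E_0(b)$.

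For $(c)\Rightarrow(a)$, I argue by contraposition. Assuming $z-\zeta$ is cyclic, Lemma~\ref{Lem:cyclicite-constant1} supplies polynomials $p_n$ with $\|(z-\zeta)p_n-1\|_b\to 0$. Setting $f_n:=2\bigl(1-(z-\zeta)p_n\bigr)\in\HH(b)$, I have $\|f_n\|_b\to 0$. On the arc $V_n=\{\xi\in\mathbb T:\,|\xi-\zeta|\le (2\|p_n\|_\infty)^{-1}\}$, which is a neighborhood of $\zeta$ in $\mathbb T$, the elementary bound $|f_n(\xi)|\ge 2-2|\xi-\zeta|\,\|p_n\|_\infty\ge 1$ holds. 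Hence $c_1(\{\zeta\})\le \|f_n\|_b\to 0$, so $c_1(\{\zeta\})=0$, which is the contrapositive of the desired implication.
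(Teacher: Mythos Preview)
Your arguments for $(a)\Longleftrightarrow(b)$ and for $(c)\Longrightarrow(a)$ are correct. In fact, your proof of $(a)\Longrightarrow(b)$ is more streamlined than the paper's: you observe directly that the closed cyclic subspace generated by $z-\zeta$ equals the closure of the range of $S_b-\zeta I$ (by density of polynomials and boundedness of $S_b$), so non-cyclicity is equivalent to $\ker(S_b^*-\bar\zeta I)\neq\{0\}$, and then you invoke \eqref{point-spectrum-boundary}. The paper instead builds the eigenvector of $S_b^*$ by hand, showing that $[z-\zeta]^\perp$ is one-dimensional and that the associated functional is the point evaluation at $\zeta$; this is more explicit but longer. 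Your $(c)\Longrightarrow(a)$ is essentially the paper's argument with an explicit choice of neighborhood.

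The implication $(b)\Longrightarrow(d)$, however, has a genuine gap. You aim to show $|f(\zeta)|=1$ by showing that the inner factor $I$ of $f$ satisfies $|I(\zeta)|=1$, and you claim that membership in $\HH(b)$ together with $\zeta\in E_0(b)$ ``prevent the inner factor of $f$ from having a singular component or an accumulation of zeros at $\zeta$.'' This is neither proved nor clear: the condition $\zeta\in E_0(b)$ only guarantees that $f$ itself has a non-tangential limit at $\zeta$, and from $|f|=1$ a.e.\ near $\zeta$ one cannot conclude that this limit has modulus $1$ (think of the atomic inner function at $\zeta$, which has modulus $1$ a.e.\ on $\T$ yet non-tangential limit $0$ at $\zeta$). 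Nothing you have said rules out such behaviour for the inner part of a general $f\in\HH(b)$.

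The paper sidesteps this difficulty entirely. Instead of bounding $|f(\zeta)|$, it passes to the outer factor $f_o=T_{\bar f_i}f$ and uses the fact that coanalytic Toeplitz operators act contractively on $\HH(b)$ (so $\|f_o\|_b\le\|f_i\|_\infty\|f\|_b=\|f\|_b$). Since $|f_o|=|f|=1$ a.e.\ on the neighborhood of $\zeta$, the outer representation of $f_o$ shows it extends analytically across that arc, hence $|f_o(\zeta)|=1$; applying the bounded evaluation $k_\zeta^b$ to $f_o$ then gives $1\le C\|f_o\|_b\le C\|f\|_b$. Thus the lower bound on $\|f\|_b$ comes from the outer part alone, and no control of the inner factor is needed.
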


\begin{proof}  $(b)\implies (a)$: follows immediately from Lemma~\ref{Lem1:CS}.
\par\smallskip
$(a)\implies (b)$: our assumption (a) exactly means that 
\[
[z-\zeta]:=\mbox{Span}((z-\zeta)z^n:n\geq 0)\subsetneq \HH(b).
\] 
Denote by $\pi$ the orthogonal projection from $\HH(b)$ onto $[z-\zeta]^\perp$. First note that $\pi(1)\neq 0$, otherwise we would have $1\in [z-\zeta]$ and then the function $z-\zeta$ would be cyclic for $S_b$,  which is a contradiction. 

Let us now prove that $[z-\zeta]^\perp=\mathbb C \pi(1)$. For every $g\in [z-\zeta]^\perp$ and every $n\geq 0$, we have
\[
0=\langle g,(z-\zeta)z^n\rangle_b = \langle g,z^{n+1}\rangle_b-\overline{\zeta} \langle g,z^n\rangle_b.
\]
From this, we immediately get that 
\begin{equation}\label{point}
 \langle g,z^n\rangle_b={\overline{\zeta}}^n \langle g,1\rangle_b, \quad n\ge 0.
\end{equation}
This implies that $\langle \pi(1),1\rangle_b\neq 0$ (otherwise, by (\ref{point}) we would have that $\pi(1)$ is orthogonal to $z^n$ for every $n\ge0$, which implies that $ \pi(1)=0$). Secondly, if we define $c:=\frac{\langle g,1\rangle_b}{\langle \pi(1),1\rangle_b}$, then we have $\langle g-c\pi(1),z^n\rangle_b=0$ for every $n\geq 0$. By the density of polynomials in $\HH(b)$, we deduce that $g=c\pi(1)$, which proves that $[z-\zeta]^\perp$ is of dimension $1$, generated by $\pi(1)$. 

Now consider the continuous linear functional $\rho:\mathbb C\pi(1)\longrightarrow \mathbb C$ defined by $\rho(\alpha\pi(1))=\alpha$ for every $\alpha\in\C$. Let us check that for every $n\geq 0$, 
\begin{equation}\label{eq13EZ:Thm:capacity}
(\rho\circ\pi)(z^n)=\zeta^n. 
\end{equation}
For $n=0$, this is true by definition. Assume that \eqref{eq13EZ:Thm:capacity} is satisfied for some integer $n\geq 0$. Then, 
\[
(\rho\circ\pi)(z^{n+1})=(\rho\circ\pi)(z^n(z-\zeta))+\zeta (\rho\circ\pi)(z^n)=\zeta^{n+1}.
\]
By induction, we deduce \eqref{eq13EZ:Thm:capacity} and by linearity, for any polynomial $p$, we have $(\rho\circ \pi)(p)=p(\zeta)$. Now, using the continuity of $\rho$ and $\pi$, we obtain that there exists a constant $C>0$ such that 
\[
|p(\zeta)|\leq C \|p\|_b,\qquad \mbox{for any polynomial }p\in\C[X].
\]
Denote by $L_\zeta$ the linear functional defined on $\mathbb C[X]$ by $L_\zeta(p)=p(\zeta)$, $p\in\mathbb C[X]$. Then $L_\zeta$ is continuous on
$\C[X]$ endowed with the topology of
$\HH(b)$. Hence it extends to a continuous linear map on $\HH(b)$. By the Riesz representation theorem, there exists a unique vector $h_\zeta\in\HH(b)$, $h_\zeta\neq 0$,  such that 
\[
p(\zeta)=L_\zeta(p)=\langle p,h_\zeta\rangle_b, \qquad \mbox{for any polynomial }p\in\C[X].
\]
Now, note that for any polynomial $p$, we have 
\[
\langle p,S_b^* h_\zeta\rangle_b=\langle zp,h_\zeta \rangle_b=\zeta p(\zeta)=\langle p,\overline{\zeta}h_\zeta\rangle_b,
\]
whence, using \eqref{eq:density-polynomial}, $S_b^*h_\zeta=\overline{\zeta}h_\zeta$. In particular, $\overline{\zeta}$ belongs to the point spectrum of $S_b^*$. But by \eqref{point-spectrum-boundary}, this implies that $b$ has an angular derivative at $\zeta$, which is equivalent to the property that $\zeta\in E_0(b)$. Note that the function $h_\zeta$ is in fact the reproducing kernel $k_\zeta^b$ of $\HH(b)$ at the point $\zeta$.
\par\smallskip
$(b)\implies (d)$: assume now that $\zeta\in E_0(b)$. Let $f\in\HH(b)$ be such that $|f|=1$ a.e. on a neighborhood $\mathcal O$ of $\zeta$. Let us consider the inner-outer factorisation of $f=f_i f_o$, where $f_i$ is the inner part and $f_o$ the outer part of $f$. Since by definition $|f_i|=1$ a.e. on $\mathbb T$, we have $|f_o|=1$ a.e. on $\mathcal O$. Moreover, $f_o\in\HH(b)$ and $\|f_o\|_b\leq \|f\|_b$. Indeed, $f_o=T_{\bar f_i}f$, where $T_{\bar f_i}$ is the Toeplitz operator with symbol $\bar f_i$ and $\HH(b)$ is invariant with respect to co-analytic Toeplitz operators. Furthermore, 
\[
\|f_o\|_b=\|T_{\bar f_i}f\|_b\leq \|f_i\|_\infty \|f\|_b=\|f\|_b.
\]
See \cite[Theorem 18.13]{FM2}. Since $f_o$ is outer and $\log|f_o|=0$ a.e. on $\mathcal O$, we have 
\[
f_o(z)=\lambda\exp\left(\int_{\mathbb T\setminus\mathcal O}\frac{\xi+z}{\xi-z}\log|f_o(\xi)|\,dm(\xi)\right),
\]
for some constant $\lambda\in\mathbb T$. Hence $f_o$ is analytic in a neighborhood of $\zeta$ and in particular, we deduce that $|f_o(\zeta)|=1$. Using now the fact that $\zeta\in E_0(b)$,  we know that there exists a constant $C>0$ such that  $|g(\zeta)|\leq C \|g\|_b$ for every $g\in\HH(b)$. Hence
\[
1=|f_o(\zeta)|\leq C \|f_o\|_b\leq C \|f\|_b 
\]
for every function $f\in\HH(b)$ such that $|f|=1$ a.e. on a neighborhood $\mathcal O$ of $\zeta$. We deduce that $c_2(\{\zeta\})\geq C^{-1}>0$. 
\par\smallskip
$(c)\implies (a)$: by contradiction, assume that $z-\zeta$ is cyclic for $S_b$. Then, for every $\varepsilon >0$, we can find a polynomial $q$ such that $\|q(z-\zeta)-1\|_b\leq \varepsilon$. Note that the value of the polynomial $q(z-\zeta)-1$ at $\zeta$ is $-1$. So by continuity, we can find a neighborhood $\mathcal O$ of $\zeta$ on $\mathbb T$ such that $|q(z-\zeta)-1|\geq 1/2$ on $\mathcal O$. Hence $|2(q(z-\zeta)-1)|\geq 1$ on $\mathcal O$ and by definition of $c_1(\{\zeta\})$, we obtain that 
\[
c_1(\{\zeta\})\leq 2\|q(z-\zeta)-1\|_b\leq 2 \varepsilon.
\]
Since this is true for every $\varepsilon>0$, we deduce that $c_1(\{\zeta\})=0$, which contradicts $(c)$. 
\end{proof}

If we knew that $c_1(F)$ and $c_2(F)$ were comparable, assertions (a) to (d) in Theorem \ref{Thm:capacity} would be equivalent. This motivates the following question:

\begin{question}
(i) Does there exist $\kappa>0$ such that  $c_2(F)\le \kappa\, c_1(F)$ for every $F\subseteq\T$? 

\noindent
(ii) Is it true that $c_1(F)>0$ if and only if $c_2(F)>0$?
\end{question}

\begin{Remark}
It can be easily seen from Theorem \ref{Thm:capacity} that the condition $\inf_{\overline{\mathbb D}}|f|>0$ in Corollary~\ref{cor:cyclicity-holvoisinagedeDbar} is not necessary for $f$ to be cyclic in $\HH(b)$. Indeed, let $b(z)=\frac{1+z}{2}S_{\delta_1}(z)$, where $S_{\delta_1}$ is the singular inner function associated to $\delta_1$, the Dirac measure at point $1$. See \eqref{E:dec-b-bso}. It is clear that
\[
\int_{\mathbb T}\frac{d\delta_1(\xi)}{|\zeta_0-\xi|^2}=\frac{1}{|\zeta_0-1|^2}=\infty
\]
when $\zeta_0=1$. Hence $1\notin E_0(b)$. Therefore, by Theorem~\ref{Thm:capacity}, the function $z-1$ is cyclic for $S_b$ while $\inf_{\overline{\mathbb D}}|z-1|=0$.
\end{Remark}

\begin{Corollary}
Let $b$ a non-extreme point in $\operatorname{ball}(H^{\infty})$. Let $p$ be a polynomial. 
The following assertions are equivalent: 
\begin{enumerate}
\item $p$ is cyclic for $S_b$.
\item $p(z)\neq 0$ for every $z\in\mathbb D\cup E_0(b)$. 
\end{enumerate}
\end{Corollary}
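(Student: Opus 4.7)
The plan is to treat the two implications separately, using the characterisations of cyclicity we have already assembled.

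The direction $(a)\Rightarrow(b)$ is immediate from Lemma~\ref{Lem1:CS}: if $p$ were to vanish at some $z_0\in\D$, then $p$ would fail to be outer (its inner factor would contain a nontrivial Blaschke factor), contradicting part~(a) of that lemma; and if $p(\zeta)=0$ for some $\zeta\in E_0(b)$, this would directly contradict part~(b) of that same lemma.

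For $(b)\Rightarrow(a)$, the idea is to factor $p$ into linear factors and to show that each factor is cyclic, then to combine them via the product lemma. Write
\[
p(z)=c\,\prod_{j=1}^{r}(z-\alpha_j)\,\prod_{k=1}^{s}(z-\zeta_k),
\]
where $c\neq 0$, the $\alpha_j$ lie in $\C\setminus\overline{\D}$ (since by hypothesis $p$ has no zeros in $\D$), and the $\zeta_k$ lie in $\T\setminus E_0(b)$ (by hypothesis on the boundary zeros). Each linear polynomial belongs to $\textrm{Hol}(\overline{\D})\subset\mathfrak M(\HH(b))$, so by Lemma~\ref{Lem:product-multiplicateur} (applied inductively on the number of factors), it suffices to show that each individual factor is cyclic for $S_b$; the nonzero constant $c$ is obviously cyclic since the constant function $1$ is.

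For a factor $z-\alpha_j$ with $|\alpha_j|>1$, the function is holomorphic in a neighborhood of $\overline{\D}$ and satisfies $\inf_{\overline{\D}}|z-\alpha_j|\geq |\alpha_j|-1>0$, so Corollary~\ref{cor:cyclicity-holvoisinagedeDbar} applies and gives cyclicity. For a factor $z-\zeta_k$ with $\zeta_k\in\T\setminus E_0(b)$, the equivalence $(a)\Leftrightarrow(b)$ in Theorem~\ref{Thm:capacity} says precisely that $z-\zeta_k$ is cyclic for $S_b$. Combining these conclusions through repeated application of Lemma~\ref{Lem:product-multiplicateur} yields that $p$ is cyclic for $S_b$. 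I expect no serious obstacle here, since all the ingredients are already in place; the only subtle point is ensuring that each linear factor is genuinely in $\mathfrak M(\HH(b))$ so that the product lemma applies, and this is guaranteed by the inclusion $\textrm{Hol}(\overline{\D})\subset\mathfrak M(\HH(b))$.
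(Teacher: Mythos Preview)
Your proof is correct and follows essentially the same approach as the paper's own proof: both factor $p$ into linear factors, invoke Corollary~\ref{cor:cyclicity-holvoisinagedeDbar} for roots outside $\overline{\D}$ and Theorem~\ref{Thm:capacity} for roots on $\T\setminus E_0(b)$, and then combine via Lemma~\ref{Lem:product-multiplicateur}. Your additional remark that linear polynomials lie in $\textrm{Hol}(\overline{\D})\subset\mathfrak M(\HH(b))$ (so that the product lemma applies) is a helpful clarification that the paper leaves implicit.
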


\begin{proof}
$(a)\implies (b)$: follows immediately from Lemma~\ref{Lem1:CS}. 

$(b)\implies (a)$: factorise the polynomial $p$ as $p(z)=c\prod_{j=1}^n (z-\zeta_j)$, where by definition the roots $\zeta_j$ belong to $\mathbb C\setminus (\mathbb D\cup E_0(b))$. On the one hand, if $|\zeta_j|>1$, then, according to Corollary~\ref{cor:cyclicity-holvoisinagedeDbar}, the function $z-\zeta_j$ is cyclic for $S_b$. On the other hand, if $|\zeta_j|=1$, then $\zeta_j\not\in E_0(b)$ and Theorem~\ref{Thm:capacity} implies that  the function $z-\zeta_j$ is also cyclic for $S_b$. Thus, for every $1\leq j\leq n$, the function $z-\zeta_j$ is cyclic and it follows from Lemma~\ref{Lem:product-multiplicateur} that $p$ itself is cyclic for $S_b$.  \end{proof}

This result can be slightly generalised:

\begin{Corollary}\label{Cor:cyclicite-fonction-holomorphe-dans-Dbar}
Let $b$ a non-extreme point in $\operatorname{ball}(H^{\infty})$. Let $f\in\mbox{Hol}(\overline{\mathbb D})$.
The following assertions are equivalent: 
\begin{enumerate}
\item $f$ is cyclic for $S_b$.
\item $f$ is outer and $f(\zeta)\neq 0$ for every $\zeta\in E_0(b)$. 
\end{enumerate}

\end{Corollary}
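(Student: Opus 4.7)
The implication $(a) \Rightarrow (b)$ is free: Lemma~\ref{Lem1:CS} already yields both outerness and non-vanishing at each boundary evaluation point, without any hypothesis on the analytic class of $f$.

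For $(b) \Rightarrow (a)$, the plan is to factor $f$ as a product of two cyclic multipliers and invoke Lemma~\ref{Lem:product-multiplicateur}. Since $f \in \mbox{Hol}(\overline{\mathbb D})$, the function $f$ extends holomorphically to some open set containing the compact $\overline{\mathbb D}$, so the zero set of $f$ in $\overline{\mathbb D}$ is finite. Outerness of $f$ rules out zeros inside $\mathbb D$, so the (finitely many) zeros of $f$ in $\overline{\mathbb D}$ all lie on $\mathbb T$; by hypothesis none of them belongs to $E_0(b)$. I would denote these zeros by $\zeta_1, \dots, \zeta_n$ with respective multiplicities $m_1, \dots, m_n$, and write
\[
f(z) = p(z) \, g(z), \qquad p(z) := \prod_{j=1}^n (z - \zeta_j)^{m_j},
\]
so that $g \in \mbox{Hol}(\overline{\mathbb D})$ has no zero in $\overline{\mathbb D}$, which by compactness gives $\inf_{\overline{\mathbb D}} |g| > 0$.

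Now both factors are cyclic for $S_b$. Indeed, $g$ is cyclic by Corollary~\ref{cor:cyclicity-holvoisinagedeDbar}. For $p$, all its roots $\zeta_j$ lie in $\mathbb T \setminus E_0(b) \subset \mathbb C \setminus (\mathbb D \cup E_0(b))$, so the preceding corollary (the characterization of cyclic polynomials) applies and yields that $p$ is cyclic for $S_b$. Moreover, since $\mbox{Hol}(\overline{\mathbb D}) \subset \mathfrak M(\HH(b))$ (cf.\ the argument in the proof of Corollary~\ref{cor:cyclicity-holvoisinagedeDbar}), both $p$ and $g$ belong to $\mathfrak M(\HH(b))$.

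Finally, Lemma~\ref{Lem:product-multiplicateur} applied to the pair $(p,g)$ of cyclic multipliers gives that $f = pg$ is cyclic for $S_b$, which completes the proof. There is no real obstacle here: once one observes the finiteness of the zero set of $f$ in $\overline{\mathbb D}$, the statement is essentially a bookkeeping assembly of Corollary~\ref{cor:cyclicity-holvoisinagedeDbar}, the polynomial corollary, and Lemma~\ref{Lem:product-multiplicateur}.
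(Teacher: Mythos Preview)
Your proof is correct and follows essentially the same route as the paper: factor $f$ as a polynomial carrying the boundary zeros times a zero-free $g\in\mbox{Hol}(\overline{\mathbb D})$, then combine Corollary~\ref{cor:cyclicity-holvoisinagedeDbar}, the polynomial cyclicity corollary (equivalently Theorem~\ref{Thm:capacity}), and Lemma~\ref{Lem:product-multiplicateur}. The only cosmetic difference is that you bundle all the linear factors into a single polynomial $p$ and track multiplicities explicitly, whereas the paper applies Lemma~\ref{Lem:product-multiplicateur} directly to the individual factors $z-\zeta_j$ and $g$.
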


\begin{proof}
$(a)\implies (b)$: follows from Lemma~\ref{Lem1:CS}. 

$(b)\implies (a)$: since $f$ is outer and $f\in\mbox{Hol}(\overline{\mathbb D})$, $f$ does not vanish on the unit disc and has at most a finite number of zeros on $\mathbb T$ (otherwise by compactness and the uniqueness principle for holomorphic functions, $f$ would vanish identically). Let $\zeta_1,\zeta_2,\dots,\zeta_n$ be the (possible) zeros of $f$ on $\mathbb T$. Then there exists a function $g\in\mbox{Hol}(\overline{\mathbb D})$ with $\inf_{\overline{\mathbb D}}|g|>0$ such that
\[
f(z)=\prod_{j=1}^n (z-\zeta_j) g(z),\qquad z\in\overline{\mathbb D}.
\]
Our assumption implies that for every $1\leq j\leq n$, $\zeta_j\notin E_0(b)$, and thus by Theorem~\ref{Thm:capacity}, the function $z-\zeta_j$ is cyclic for $S_b$. Moreover, by Corollary~\ref{cor:cyclicity-holvoisinagedeDbar}, the function $g$ is also cyclic. Now it follows from Lemma~\ref{Lem:product-multiplicateur} that $f$ itself is cyclic for $S_b$. 
\end{proof}

\begin{Example}\label{example-rkhardycyclic}
Let $b$ a non-extreme point in $\operatorname{ball}(H^{\infty})$. For every $\lambda\in\mathbb D$, $k_\lambda$ is a cyclic vector for $S_b$. Indeed, it is clear that $k_\lambda(z)=(1-\overline{\lambda}z)^{-1}$ satisfies the conditions of Corollary~\ref{Cor:cyclicite-fonction-holomorphe-dans-Dbar}. Hence $k_\lambda$ is cyclic. \
\par\smallskip
In particular, by \eqref{eq:density-crk}, the set of cyclic vectors for $S_b$ spans a dense subspace in $\HH(b)$.
\end{Example}

\par\medskip
\section{The rational case}\label{sec4}

The main result of this section is a characterisation of cyclic functions for $S_b$ when $b$ is a rational function which is not a finite Blaschke product. As mentioned already in the Introduction, this result can be derived from the work \cite{MR4246975} by Luo -- Gu -- Richter, but we provide here an elementary proof, the ideas of which will turn out to be also relevant to the case where $b=(1+I)/2$ (see Section \ref{sec5} below). Note that Theorem \ref{Thm:rational-case} extends a result proved in \cite{MR3309352}
in the particular case where $b(z)=(1+z)/2$.

\begin{Theorem}\label{Thm:rational-case}
Let $b\in \operatorname{ball}(H^{\infty})$ and assume that $b$ is rational (but not a finite Blaschke product). Let $a_1$ be the associated polynomial given by \eqref{eq:definition of a}, and let $f\in \HH(b)$. Then the following assertions are equivalent:
\begin{enumerate}
\item $f$ is cyclic for $S_b$.
\item $f$ is an outer function and for every $1\leq k\leq n$, $f(\zeta_k)\neq 0$. 
\end{enumerate}
\end{Theorem}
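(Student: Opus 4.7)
The forward direction (a) $\Rightarrow$ (b) is immediate: Lemma \ref{Lem1:CS} says any cyclic vector for $S_b$ must be outer and non-vanishing on $E_0(b)$, and for a rational non-inner $b$ we have $E_0(b) = \{\zeta_1,\dots,\zeta_n\}$ by \eqref{eq:E0b-rationnel}. So the substantive content is the converse (b) $\Rightarrow$ (a).

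My plan for the converse is to work directly with the decomposition $\HH(b) = a_1 H^2 \oplus \P_{N-1}$ from \eqref{eq:formula for H(b)} under the equivalent norm \eqref{eq:norm in h(b)}. Given $f$ satisfying (b), I would decompose it as $f = a_1 \widetilde{f} + p$ with $\widetilde{f} \in H^2$ and $p \in \P_{N-1}$. By \eqref{q}, $f(\zeta_k) = p(\zeta_k)$, so hypothesis (b) translates into $p(\zeta_k) \neq 0$ for every $1 \leq k \leq n$; since $a_1(z) = \prod_{k=1}^n (z - \zeta_k)^{m_k}$, this is precisely the statement that $p$ and $a_1$ are coprime in $\C[z]$. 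Bezout's identity then furnishes polynomials $A, C \in \C[z]$ with
\[
A p = 1 + a_1 C.
\]

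The key computation is as follows. For an arbitrary polynomial $D$, set $q_D := A + a_1 D$. Expanding $q_D f = (A + a_1 D)(a_1 \widetilde{f} + p)$ and using $Ap = 1 + a_1 C$, one gets
\[
q_D f - 1 = a_1\bigl(A \widetilde{f} + C + D f\bigr),
\]
which lies in $a_1 H^2$ and therefore has trivial polynomial part in the decomposition \eqref{uUUiipPPS}. Under the equivalent norm \eqref{eq:norm in h(b)} this means
\[
\vvvert q_D f - 1\vvvert_b = \|A \widetilde{f} + C + Df\|_{H^2}.
\]

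To close the argument I would invoke outerness of $f$: by Beurling's theorem $f$ is cyclic for $S$ in $H^2$, and since $-(A \widetilde{f} + C)$ belongs to $H^2$, it can be approximated in $H^2$-norm by polynomial multiples $D_n f$. Setting $q_n := A + a_1 D_n$ then yields $\vvvert q_n f - 1 \vvvert_b \to 0$; by equivalence of norms and Lemma \ref{Lem:cyclicite-constant1}, $f$ is cyclic for $S_b$. The only conceptual obstacle is spotting the right "initial approximant" $A$: the Bezout identity is used to prescribe the $\P_{N-1}$-part of $q_D f$ to equal the constant $1$, which reduces the cyclicity question in $\HH(b)$ to the familiar $H^2$-cyclicity of outer functions; everything else is routine.
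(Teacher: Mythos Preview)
Your proof is correct and takes essentially the same approach as the paper's. The only difference is cosmetic: you invoke Bezout's identity to produce a polynomial $A$ with $Ap \equiv 1 \pmod{a_1}$, whereas the paper constructs the analogous polynomial $r\in\P_{N-1}$ by explicit Hermite interpolation; the structure of the approximating polynomials ($A + a_1 D$ versus $r + a_1 q_n$) and the reduction to $H^2$-cyclicity of the outer function $f$ are identical.
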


\begin{proof}
$(a)\implies (b)$: according to \eqref{eq:E0b-rationnel}, we know that $E_0(b)=\{\zeta_k:1\leq k\leq n\}$. Hence this implication follows from Lemma~\ref{Lem1:CS}. 

$(b)\implies (a)$: according to \eqref{uUUiipPPS}, write $f=a_1\widetilde{f}+p$, where $\widetilde f\in H^2$ and $p\in\P_{N-1}$. By \eqref{q}, $p(\zeta_k)\neq 0$, $1\leq k\leq n$. 
Let now $r\in\P_{N-1}$ be the unique polynomial satisfying the following interpolation properties:  for every $1\leq k\leq n$,
\[
r^{(j)}(\zeta_k)=\begin{cases}
\frac{1}{p(\zeta_k)}& \mbox{if }j=0 \\
-\frac{1}{p(\zeta_k)}\,{\sum_{\ell=0}^{j-1}\binom{j}{\ell}r^{(\ell)}(\zeta_k)p^{(j-\ell)}(\zeta_k)}& \mbox{if }1\leq j\leq m_{k}-1.
\end{cases}
\]
This polynomial $r$ can be constructed using Hermite polynomial interpolation, see for instance \cite[Chapter 1, E. 7]{MR1367960}.
By Leibniz's rule, we easily see that for every $1\leq k\leq n$ and $0\leq j\leq m_k-1$, we have $(rp-1)^{(j)}(\zeta_k)=0$. Hence $a_1$ divides the polynomial $rp-1$. In other words, there exists a polynomial $q$ such that $rp-1=a_1 q$. Using that $f$ is outer, we can find a sequence of polynomials $(q_n)_n$ such that $\|q_nf+r\widetilde{f}+q\|_2\to 0$ as $n\to \infty$. Define now a sequence of polynomials $(p_n)$ by $p_n=a_1q_n+r$, $n\geq 1$. Observe that 
\[
\begin{aligned}
p_nf-1=&(a_1q_n+r)f-1=a_1q_nf+r(a_1\widetilde{f}+p)-1\\
=&a_1(q_nf+r\widetilde{f})+rp-1=a_1(q_nf+r \widetilde{f}+q).
\end{aligned}
\]
Then it follows from \eqref{eq:norm in h(b)} that
\[
\vvvert p_nf-1\vvvert_b=\vvvert a_1(q_nf+r \widetilde{f}+q) \vvvert_b=\|q_nf+r \widetilde{f}+q\|_2\to 0\,\textrm{ as } n \to \infty.
\]
Therefore $f$ is cyclic for $S_b$.
 \end{proof}

\begin{Example}
Let $b(z)=\frac{1}{2}(1-z^2)$. Then it is proved in \cite{MR3503356} that $a(z)=c(z-i)(z+i)$, for some constant $c$. Thus, according to Theorem~\ref{Thm:rational-case}, a function $f\in\HH(b)$ is cyclic for $S_b$ if and only if $f$ is outer, $f(i)\neq 0$ and $f(-i)\neq 0$. 
\end{Example}

\section{The case where $b=(1+I)/2$}\label{sec5}
Our main result in this section is the following:

\begin{Theorem}\label{Thm:main-b-1+I}\label{sec5-th}
Let $I$ be an inner function, $I\not\equiv 1$, and assume that its Clark measure $\sigma_1$ associated to point $1$ (defined in \eqref{def-clark-measure}) is a discrete measure. Let $\{\zeta_n:n\geq 1\}=\{\zeta\in E_0(I): I(\zeta)=1\}$. Let $b=(1+I)/2$, and $f\in\HH(b)$ which we decompose according to \eqref{decomposition2-hb} as $f=(1-I)g_1+g_2$, with $g_1\in H^2$, $g_2\in K_I$. Assume that:
\begin{enumerate}
\item $g_1,g_2\in H^\infty$;
\item $f$ is outer;
\item we have 
\[
\sum_{n\geq 1}\frac{1}{|f(\zeta_n)|^2 |I'(\zeta_n)|}<\infty.
\]
\end{enumerate}
Then $f$ is cyclic for $S_b$. 
\end{Theorem}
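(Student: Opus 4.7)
The plan is to prove cyclicity of $f$ via Lemma~\ref{Lem:cyclicite-constant1}, i.e.\ to show that the constant function $1$ belongs to
\[
\mathcal{C}_f := \overline{\{pf : p \in \mathbb{C}[X]\}}^{\HH(b)}.
\]
Hypothesis (a) ensures via Lemma~\ref{Lem:multiplier-b-1+I} that $f \in \mathfrak{M}(\HH(b))$, so $M_f$ is bounded on $\HH(b)$; together with density of polynomials in $\HH(b)$, this gives $\mathcal{C}_f = \overline{f\,\HH(b)}^{\HH(b)}$, so that $f\xi \in \mathcal{C}_f$ for every $\xi \in \HH(b)$. Using Lemma~\ref{machin}(i) and Lemma~\ref{lem:existence-limite-radiale} applied to the constant $1$, I would write $1 = (1-I)s_1 + \psi$ with $s_1 \in H^2$ and
\[
\psi = \sum_{n \geq 1} \frac{1}{\|k_{\zeta_n}^I\|_2^2}\, k_{\zeta_n}^I \in K_I,
\]
the series converging in $K_I$ since $\sigma_1$ is a finite Borel measure. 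It then suffices to show $(1-I)s_1 \in \mathcal{C}_f$ and $\psi \in \mathcal{C}_f$.

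For the first summand I would establish the stronger inclusion $(1-I)H^2 \subset \mathcal{C}_f$. Given $h \in H^2$, hypothesis (b) together with Beurling's theorem provides polynomials $p_n$ with $p_n f \to h$ in $H^2$, and hence $(1-I)p_n f \to (1-I)h$ in $H^2$; both sides lie in $(1-I)H^2$, on which the equivalent norm $|||\cdot|||_b$ of Lemma~\ref{machin}(ii) is just $2\|\cdot\|_2$, so the convergence takes place in $\HH(b)$ as well. Since each $(1-I)p_n$ lies in $\HH(b)$ and is therefore approximable in $\HH(b)$-norm by polynomials (by \eqref{eq:density-polynomial}), boundedness of $M_f$ places $f(1-I)p_n$ in $\mathcal{C}_f$, and passing to the limit yields $(1-I)h \in \mathcal{C}_f$.

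For the second summand, hypothesis (c) combined with Parseval for the orthogonal Clark family $\{k_{\zeta_n}^I\}$ in $K_I$ makes the series
\[
\varphi := \sum_{n \geq 1} \frac{1}{f(\zeta_n)\,\|k_{\zeta_n}^I\|_2^2}\, k_{\zeta_n}^I
\]
converge in $K_I$, producing $\varphi \in K_I$ with boundary value $\varphi(\zeta_n) = 1/f(\zeta_n)$ at each $n$. Then $\varphi f \in \HH(b)$: writing $\varphi f = (1-I)\varphi g_1 + \varphi g_2$, the first term lies in $(1-I)H^2 \subset \HH(b)$ thanks to $g_1 \in H^\infty$, while $\varphi g_2 \in K_{I^2} \subset \HH(b)$ by Lemmas~\ref{Lem:model-space-KI.KIcontenu dans KI^2} and~\ref{Lem:sequence-model-spaces-contained-in-DBR}. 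By Lemma~\ref{lem:existence-limite-radiale} applied to $\varphi f$, the $K_I$-component of $\varphi f$ in the decomposition~\eqref{decomposition2-hb} takes at each $\zeta_n$ the value $(\varphi f)(\zeta_n) = \varphi(\zeta_n) f(\zeta_n) = 1$ and hence equals $\psi$. Consequently $\varphi f - \psi \in (1-I)H^2 \subset \mathcal{C}_f$ by the previous step; and since $\varphi \in K_I \subset \HH(b)$ gives $\varphi f \in \mathcal{C}_f$ via polynomial approximation plus the multiplier property of $f$, we obtain $\psi \in \mathcal{C}_f$, and hence $1 \in \mathcal{C}_f$. The most delicate point is the identification of the $K_I$-component of $\varphi f$ as $\psi$: one has to verify that $\varphi$ genuinely admits $1/f(\zeta_n)$ as non-tangential boundary value at $\zeta_n$ (which follows from the boundedness on $K_I$ of the evaluation functional $g \mapsto \langle g, k_{\zeta_n}^I\rangle_2$ together with the orthogonality of the Clark basis) and that this boundary value multiplies with $f(\zeta_n)$, both factors having non-tangential limits at $\zeta_n$ since $\zeta_n \in E_0(I) \subset E_0(b)$.
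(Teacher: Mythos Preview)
Your proof is correct and follows essentially the same approach as the paper. Both arguments hinge on the same interpolating function in $K_I$ (your $\varphi$ is the paper's $r=\sum_n f(\zeta_n)^{-1}\|k_{\zeta_n}^I\|_2^{-2}k_{\zeta_n}^I$), the observation that $\varphi f-1\in(1-I)H^2$, Beurling's theorem applied inside $(1-I)H^2$, and the multiplier property of $f$; the only difference is organizational---the paper builds an explicit sequence $\psi_n=(1-I)q_n+r$ with $\psi_n f\to 1$, whereas you argue more structurally by first establishing $(1-I)H^2\subset\mathcal{C}_f$ and then showing that the $K_I$-component $\psi$ of $1$ lies in $\mathcal{C}_f$. (A trivial slip: on $(1-I)H^2$ the norm $|||\cdot|||_b$ of Lemma~\ref{machin}(ii) equals $\|g_1\|_2$, not $2\|g_1\|_2$; the factor $2$ appears in $\|\cdot\|_b$ itself. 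This does not affect the argument.)
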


\begin{proof}
The proof proceeds along the same lines as in the  rational case. 
According to Lemma~\ref{Lem:multiplier-b-1+I}, $f\in\mathfrak M(\HH(b))$ and by Lemma~\ref{lem:existence-limite-radiale} we have $f(\zeta_n)=g_2(\zeta_n)$, $n\geq 1$. 
\par\smallskip
\noindent
{\emph{First step}}: We claim that there exists a sequence of functions $(\psi_n)_n$ in $\HH(b)$ such that $\|\psi_n f-1\|_b\to 0$ as $n\to\infty$. 
\par\smallskip
In order to construct the sequence $(\psi_n)_n$,  let us first consider the function $r$ given by
\[
r=\sum_{n=1}^\infty \frac{1}{f(\zeta_n)}\frac{k_{\zeta_n}^I}{\|k_{\zeta_n}^I\|_2^2}\cdot
\]
Recall that by \eqref{eq:nice-formula-derivative-norme-kernel-clark-measure}, $\|k_{\zeta_n}^I\|_2^2=|I'(\zeta_n)|$. 
Combining this with condition (c) and the fact that the family $(k_{\zeta_n}^I/\|k_{\zeta_n}^I\|_2)_n$ forms an orthonormal basis of $K_I$ (since $\sigma_1$ is discrete, Clark's theorem holds true), we see that the series defining the function $r$ is convergent in $K_I$. In others words, $r\in K_I$ and $r(\zeta_n)=1/f(\zeta_n)=1/g_2(\zeta_n)$ for every $n\geq 1$. 
\par\smallskip
Let us now prove that $rg_2-1\in (1-I)H^2$. Observe that Lemma~\ref{Lem:model-space-KI.KIcontenu dans KI^2} implies that $rg_2\in K_{I^2}=K_I\oplus IK_I$. Hence there exist $\varphi_1,\varphi_2\in K_I$ such that $rg_2-1=\varphi_1+I\varphi_2-1$. Since $r(\zeta_n)g_2(\zeta_n)-1=0$, we have $\varphi_1(\zeta_n)+\varphi_2(\zeta_n)-1=0$ for every $n\geq 1$.  
Note that \((1-\overline{I(0)})^{-1}(1-\overline{I(0)}I)=((1-\overline{I(0)})^{-1} k_0^I \in K_{I}\) and
\[
(1-\overline{I(0)})^{-1}(1-\overline{I(0)}I(\zeta _{n}))=1\qquad\textrm{for every}\ n\ge 1.
\]
Since the family \(\bigl (k_{\zeta _{n}}^{I} \bigr)_{n} \) is complete in \(K_{I}\), and since \(\varphi _{1}(\zeta _{n})+\varphi _{2}(\zeta _{n})=1\) for every \(n\ge 1\), we deduce that
\[
\varphi _{1}+\varphi_{2}=(1-\overline{I(0)})^{-1}(1-\overline{I(0)}I).
\]
Hence
\begin{align*}
 rg_{2}-1&=\varphi _{1}+I\varphi _{2}-1
 =-\varphi _{2}+(1-\overline{I(0)})^{-1}(1-\overline{I(0)}I)+I\varphi _{2}-1\\
 &=(1-I)(-\varphi _{2})+(1-\overline{I(0)})^{-1}(1-\overline{I(0)}I)-1.
\end{align*}
Observe that
\[
(1-\overline{I(0)})^{-1}(1-\overline{I(0)}I)-1=(1-\overline{I(0)})^{-1}\overline{I(0)}(1-I),
\]
from which it follows that 
\[
rg_{2}-1=(1-I)(-\varphi _{2}+\overline{I(0)}(1-\overline{I(0)})^{-1}).
\]
This proves that \(rg_{2}-1\) belongs to \((1-I)H^{2}\). Write \(rg_{2}-1\) as 
\(rg_{2}-1=(1-I)g_{3}\), with \(g_{3}\in H^{2}\).
%
\par\smallskip
Using now that $f$ is outer, and that $rg_1\in H^2$ (as $g_1\in H^{\infty}$) we can find a sequence of polynomials $(q_n)_n$ such that $\|q_nf+rg_1+g_3\|_2\to 0$, as $n\to\infty$. We then define for each $n\geq 1$ a function $\psi_n$ as
\[
\psi_n:=(1-I)q_n+r,
\]
where $q_n$ and $r$ are defined above. Note that $\psi_n\in (1-I)H^2+K_I=\HH(b)$ and 
\[
\begin{aligned}
\psi_n f-1=&(1-I)q_nf+rf-1\\
=&(1-I)q_nf+(1-I)rg_1+rg_2-1\\
=&(1-I)(q_nf+rg_1+g_3).
\end{aligned}
\]
It follows from Lemma~\ref{machin} that there exists a positive constant $C$ such that
\[
\begin{aligned}
\|\psi_nf-1\|_b =&\|(1-I)(q_nf+rg_1+g_3)\|_b\\
\le& C\, \vvvert (1-I)(q_nf+rg_1+g_3) \vvvert_b\\
=& C\, \|q_nf+rg_1+g_3\|_2,
\end{aligned}
\]
from which it follows that $\|\psi_nf-1\|_b\to 0$ as $n\to\infty$. 
\par\smallskip
{\emph{Second step}}: Let us now prove that there exists a sequence of polynomials $(p_{n})_n$ such that $\|p_ {_n}f-1\|_b\to 0$ as $n\to\infty$.
\par\smallskip
By the density of polynomials in $\HH(b)$, we can find a sequence of polynomials $(p_{n})_n$ such that $\|p_{n}-\psi_n\|_b\to 0$ as $n\to\infty$. Now write
\[
\begin{aligned}
\|p_{n}f-1\|_b\leq& \|p_{n}f-\psi_n f\|_b+\|\psi_n f-1\|_b\\
\leq & \|f\|_{\mathfrak M(\HH(b)} \|p_{n}-\psi_n\|_b+\|\psi_n f-1\|_b,
\end{aligned}
\]
and by the choice of the sequence $(p_{n})_n$ and the first step, we get the conclusion of the second step. 
\par\smallskip
We finally conclude that $f$ is cyclic for $S_b$.
\end{proof}

\begin{Remark}
If $I$ is an inner function such that, for some $\alpha\in\mathbb T$, its Clark measure $\sigma_\alpha$ is a discrete measure and $I\not\equiv \alpha$, then we may apply Theorem~\ref{sec5-th} replacing $I$ by $\bar\alpha I$ and $b=(1+I)/2$ by $b=(1+\bar\alpha I)/2$.  
\end{Remark}

\begin{Example}
Let \(I=S_{\delta _{1}}\) be the inner function associated to the measure \(\delta _{1}\):
\[
I(z)=\exp\Bigl (-\dfrac{1+z}{1-z} \Bigr) .
\]
In this case we can compute explicitly the Clark basis of \(K_{I}\) associated to point $1$. We have \(E_1=\{\zeta\in E_0(I):I(\zeta)=1\}=\{\zeta _{n}\;;\;n\in\mathbb{Z}\}\) with
\[\zeta _{n}=\dfrac{2i\pi n-1}{2i\pi n+1}\quad\textrm{and}\quad I'(\zeta _{n})=-\dfrac{1}{2}(2i\pi n+1)^{2},\ n\in\mathbb{Z}.\]
Therefore, if \(f\in\mathcal{H}(b)\) is outer, with \(f=(1-I)g_{1}+g_{2}\), \(g_{1}\in H^{\infty}\), \(g_{2}\in K_{I}\cap H^{\infty}\), and if 
\[
\sum_{n\in\mathbb{Z}}\dfrac{1}{|f(\zeta _{n})|^{2}}\cdot\dfrac{1}{4n^{2}\pi ^{2}+1}<+\infty,
\]
then \(f\) is cyclic for \(S_{b}\). 
\end{Example}

\begin{Remark}
There exists a recipe to construct an inner function satisfying the hypothesis of Theorem~\ref{sec5-th}. Let $\sigma$ be a positive discrete measure on $\mathbb T$ and let $H\sigma$ be its Herglotz transform, 
\[
H\sigma(z)=\int_{\mathbb T}\frac{\xi+z}{\xi-z}\,d\sigma(\xi),\qquad z\in\mathbb D.
\]
We easily see that $H\sigma$ defines an analytic function on $\mathbb D$ and satisfies $\Re e (H\sigma(z))\geq 0$ for every $z\in\D$. Now define a function $I$ on $\mathbb D$ as 
\[
I=\frac{H\sigma-1}{H\sigma+1}\cdot
\]
Since $\Re e H\sigma\geq 0$, it is easy to check that $I\in H^\infty$ and $|I|\leq 1$. Moreover, for every $0<r<1$ and $\zeta\in\mathbb T$, we have
\begin{equation}\label{trick-construction-inner-function}
|I(r\zeta)|=\frac{(\Re e (H\sigma(r\zeta))-1)^2+(\Im m (H\sigma(r\zeta))^2}{(\Re e (H\sigma(r\zeta))+1)^2+(\Im{m}(H\sigma(r\zeta))^2}\cdot
\end{equation}
Since $\sigma$ is a singular measure, it is well-known that for almost all $\zeta\in\mathbb T$, we have
\[
\Re e(H\sigma(r\zeta))=\int_{\mathbb T}\frac{1-r^2}{|\xi-r\zeta|^2}\,d\sigma(\xi)\to 0\quad\mbox{as }r\to 1^-.
\]
See \cite[Corollary 3.4]{FM1}. Moreover, the radial limit of $\Im{m}(H\sigma)$ also exists and is finite for almost all $\zeta\in\mathbb T$. See \cite[page 113]{FM1}. Thus, it follows from \eqref{trick-construction-inner-function} that $|I(\zeta)|=1$ for almost all $\zeta\in\mathbb T$,  meaning that $I$ is an inner function. Of course, we have $I\not\equiv 1$. Now, we easily check that 
\[
\frac{1-|I(z)|^2}{|1-I(z)|^2}=\Re e(H\sigma(z))=\int_{\mathbb T}\frac{1-|z|^2}{|\xi-z|^2}\,d\sigma(\xi),
\]
which implies by unicity of the Clark measure that $\sigma_1=\sigma$. Therefore $I$ satisfies the assumptions of Theorem~\ref{sec5-th}.
\end{Remark}

\begin{Corollary}\label{Cor-rk-cyclique}
 Let $I$ be an inner function, $I\not\equiv 1$, and assume that $\sigma_1$ is a discrete measure. Let $b=(1+I)/2$. 
 Then  $k_\lambda^b$ is a cyclic vector for $S_b$ for every $\lambda\in\mathbb D$. 
\end{Corollary}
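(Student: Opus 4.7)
The plan is to verify that $f=k_\lambda^b$ satisfies the three hypotheses of Theorem~\ref{Thm:main-b-1+I} and then invoke it. The main step is identifying the decomposition of $k_\lambda^b$ with respect to $\HH(b)=(1-I)H^{2}\stackrel{\perp}{\oplus}_{b}K_{I}$; once this is done, the remaining two conditions reduce to short estimates.

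For the decomposition, the proof of Lemma~\ref{machin} shows that the $\HH(b)$-norm on $K_I$ equals $\sqrt{2}$ times the $H^2$-norm, so $K_I$ viewed as a closed subspace of $\HH(b)$ has reproducing kernel $\tfrac{1}{2}k_\lambda^I$ at $\lambda$. Hence the orthogonal projection of $k_\lambda^b$ onto $K_I$ in $\HH(b)$ equals $\tfrac{1}{2}k_\lambda^I$. A direct calculation on a common denominator then yields
\[
k_\lambda^b(z)-\tfrac{1}{2}k_\lambda^I(z) = \frac{(1-\overline{I(\lambda)})(1-I(z))}{4(1-\overline{\lambda}z)} = (1-I(z))\,g_1(z),
\]
with $g_1 = \tfrac{1-\overline{I(\lambda)}}{4}\,k_\lambda$ and $g_2:=\tfrac{1}{2}k_\lambda^I\in K_I$. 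Both $g_1$ and $g_2$ lie in $H^\infty$ since $\lambda\in\D$, so hypothesis~(a) of Theorem~\ref{Thm:main-b-1+I} holds.

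For outerness, I will factor the numerator of $k_\lambda^b$ as
\[
1-\overline{b(\lambda)}b(z) = \tfrac{3-\overline{I(\lambda)}}{4}\bigl(1-\gamma\,I(z)\bigr), \qquad \gamma = \tfrac{1+\overline{I(\lambda)}}{3-\overline{I(\lambda)}}.
\]
Using $|I(\lambda)|<1$, the identity $|3-\overline{I(\lambda)}|^2-|1+\overline{I(\lambda)}|^2 = 8(1-\operatorname{Re}I(\lambda))>0$ gives $|\gamma|<1$. Consequently $|1-\gamma I|\geq 1-|\gamma|>0$ on $\D$, so $1-\gamma I$ is invertible in $H^\infty$ and thus outer; together with the outerness of $1-\overline{\lambda}z$, this makes $k_\lambda^b$ a quotient of outer functions lying in $H^2$, hence outer. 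This verifies hypothesis~(b).

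Finally, since $I(\zeta_n)=1$ forces $b(\zeta_n)=1$,
\[
k_\lambda^b(\zeta_n) = \frac{1-\overline{b(\lambda)}}{1-\overline{\lambda}\zeta_n} = \frac{1-\overline{I(\lambda)}}{2(1-\overline{\lambda}\zeta_n)},
\]
so $|k_\lambda^b(\zeta_n)|\geq |1-I(\lambda)|/4$ uniformly in $n$. Combining this with the identity $1/|I'(\zeta_n)|=\sigma_1(\{\zeta_n\})$ from~\eqref{eq:nice-formula-derivative-norme-kernel-clark-measure} and the finiteness of $\sigma_1$ yields
\[
\sum_{n\geq 1}\frac{1}{|k_\lambda^b(\zeta_n)|^2\,|I'(\zeta_n)|} \leq \frac{16}{|1-I(\lambda)|^2}\,\sigma_1(\T) < \infty,
\]
which is hypothesis~(c). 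Theorem~\ref{Thm:main-b-1+I} then delivers the cyclicity of $k_\lambda^b$. The computations throughout are routine; the only genuinely non-automatic step is pinning down the $K_I$-component of $k_\lambda^b$, which is handled conceptually via the reproducing kernel argument above.
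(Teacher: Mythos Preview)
Your proof is correct and follows essentially the same route as the paper: both verify hypotheses (a), (b), (c) of Theorem~\ref{Thm:main-b-1+I} for $f=k_\lambda^b$ via the same decomposition $k_\lambda^b=(1-I)\tfrac{1-\overline{I(\lambda)}}{4}k_\lambda+\tfrac{1}{2}k_\lambda^I$ and the same estimate for (c). The only noticeable difference is in the outerness step: the paper observes that $\Re\bigl(1-\overline{b(\lambda)}b(z)\bigr)\geq 1-|b(\lambda)||b(z)|>0$ and invokes the fact that functions with positive real part are outer, whereas you factor $1-\overline{b(\lambda)}b=\tfrac{3-\overline{I(\lambda)}}{4}(1-\gamma I)$ with $|\gamma|<1$ and use invertibility in $H^\infty$; both are short standard arguments, and your reproducing-kernel identification of the $K_I$-component is a pleasant conceptual shortcut that the paper replaces by the direct computation.
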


\begin{proof} 
Let us prove that $k_\lambda^b$ satisfies the assumptions $(a),(b)$ and $(c)$ of Theorem~\ref{sec5-th}. First, using that $b=(1+I)/2$, straightforward computations show that 
\[
\frac{1-\overline{b(\lambda)}b(z)}{1-\overline{\lambda}z}=(1-I(z))\frac{1}{4}\cdot\frac{(1-\overline{I(\lambda)})}{1-\overline{\lambda}z}+\frac{1}{2}\cdot\frac{1-\overline{I(\lambda)}I(z)}{1-\overline{\lambda}z}\cdot
\]
In other words, $k_\lambda^b$ can be written as $k_\lambda^b=(1-I)g_1+g_2$, with $g_1=\frac{1}{4}(1-\overline{I(\lambda)})k_\lambda$ and $g_2=\frac{1}{2}k_\lambda^I$. In particular, $g_1,g_2\in H^\infty$ and $k_\lambda^b$ satisfies the assumption $(a)$. 
\par\smallskip
Observe now that $\Re e(1-\overline{b(\lambda)}b(z))\geq 0$ and $\Re e(1-\overline{\lambda}z)\geq 0$ for every $z\in\D$, which implies that the functions $1-\overline{b(\lambda)}b(z)$ and $1-\overline{\lambda}z$ are outer. See \cite[page 67]{MR1864396}. So $k_\lambda^b$ is outer as the quotient of two outer functions. It remains to check that $k_{\lambda}^b$ satisfies assumption $(c)$. But 
\[
|k_\lambda^b(\zeta_n)|=\left|\frac{1-b(\lambda)}{1-\overline{\lambda}\zeta_n}\right|\geq \frac{|1-b(\lambda)|}{1+|\lambda|},
\]
and the property $(c)$ follows from the fact that
 \[
\sum_{n\geq 1}\frac{1}{|I'(\zeta_n)|}=\sum_{n\geq 1}\sigma_1(\{\zeta_n\})\le\sigma_1(\T)<+\infty.
\]

Thus $k_\lambda^b$ satisfies the assumptions $(a),(b),(c)$ of Theorem~\ref{sec5-th}, and $k_\lambda^b$ is cyclic for $S_b$. 

\end{proof}
It should be noted that in Corollary~\ref{Cor-rk-cyclique}, the reproducing kernels $f=k_\lambda^b$, $\lambda\in \mathbb D$, which are cyclic for $S_b$, are such that $1/f\in H^\infty$.
As we already observed in Lemma~\ref{lem:multi-inver},
certain invertibility conditions for $f$ make cyclicity easier. Using Theorem \ref{Thm:main-b-1+I}, we now construct a family of functions $f$ which are cyclic for $S_b$ but are such that $1/f\notin H^2$. 

\begin{Example}
 Let $I$ be a non-constant inner function, and assume that $\sigma_1$ is a discrete measure.  Let $b=(1+I)/2$ and $f=(1+I)k_{\lambda}^I$ for some $\lambda\in\D$. Then $f$ is cyclic for $S_b$ and $1/f\notin L^2$.
\end{Example}

\begin{proof}
  First observe that 
\[
f=(1-I)(-k_\lambda^I)+2k_\lambda^I,
\]
so that $f=(I-I)g_1+g_2$, with $g_2=-2g_1=2k_\lambda^I \in H^\infty\cap K_I$. In particular, $f$ satisfies condition $(a)$ of Theorem~\ref{Thm:main-b-1+I}. Moreover, the function $f$ is outer as  the product of two outer functions (use the same arguments as in the proof of Corollary~\ref{Cor-rk-cyclique}). Finally, since $|f(\zeta_n)|=2|k_\lambda^I(\zeta_n)|\geq |1-I(\lambda)|$,  $f$ satisfies condition $(c)$. Hence by Theorem~\ref{Thm:main-b-1+I}, $f$ is cyclic for $S_b$. 
\par\smallskip
Let us now check that $1/f\not \in L^2$. First observe that there exist two positive constants $C_1$ and $C_2$ such that
\[
C_1 \frac{1}{|1+I(\zeta)|}\le
\frac{1}{|f(\zeta)|}\le C_2 \frac{1}{|1+I(\zeta)|}\quad\mbox{ for a.e. }\zeta\in\T,
\]
because 
\[
\frac{1-|I(\lambda)|}{2}\leq |k_\lambda^I(\zeta)|\leq \frac{2}{1-|\lambda|}\cdot
\]
Now assume that $1/f$ belongs to $L^2$. Then  $1/(1+I)\in L^2$. But since $1+I$ is outer, we get that $1/(1+I)\in H^2$. See \cite[page 43]{MR1864396}. As
\[
\frac{1}{1+I}=\overline{\frac{I}{1+I}}\quad\mbox{ for a.e. }\zeta\in\T,
\]
we deduce that $1/(1+I)$ also belongs to $\overline{H^2}$. Thus $1/(1+I)$ is constant, which is a contradiction.
\end{proof}

In the context of Corollary~\ref{Cor-rk-cyclique}, it is easy to see that $(a,b)$ forms a corona pair, and we have seen that $k_\lambda^b$ is cyclic for $S_b$. In fact, this cyclicity result holds true under the (HCR) condition only.

\begin{Proposition}\label{chose}
Let $b$ be a non-extreme point in $\operatorname{ball}(H^{\infty})$, and let $a$ be its pythagorean mate. Assume that $(a,b)$ satisfies (HCR). Then the following. assertions hold:
\begin{enumerate}
\item $k_\lambda^b$ is cyclic for $S_b$ for every $\lambda\in\mathbb D$;
\item If $b$ is furthermore assumed to be outer, then $bk_\lambda$ is also cyclic for $S_b$ for every $\lambda\in\mathbb D$. In particular, $b$ is a cyclic vector for $S_b$.
\end{enumerate}
\end{Proposition}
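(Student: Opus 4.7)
The plan is to reduce both assertions to cyclicity of simpler factors via Lemma~\ref{Lem:product-multiplicateur}, using as the main technical input the observation that under (HCR), \(b\) itself is a multiplier of \(\HH(b)\). To see this, note that under (HCR) we have \(\HH(b)=\mathcal M(\bar a)\) with equivalent norms, so every \(f\in\HH(b)\) can be written \(f=T_{\bar a}\eta\) for a unique \(\eta\in H^2\); then the Toeplitz identity
\[
T_{\bar b}(bf)=P_+(|b|^2 f)=f-T_{\bar a}(af)=T_{\bar a}(\eta-af)
\]
shows that \((bf)^+=\eta-af\in H^2\), hence \(bf\in\HH(b)\), and a norm estimate yields \(b\in\mathfrak M(\HH(b))\).

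For part (a), I would factor \(k_\lambda^b=(1-\overline{b(\lambda)}b)\,k_\lambda\). Both factors are multipliers of \(\HH(b)\): \(k_\lambda\in\mbox{Hol}(\overline{\mathbb D})\subset\mathfrak M(\HH(b))\) (as recalled in the proof of Corollary~\ref{cor:cyclicity-holvoisinagedeDbar}), and \(1-\overline{b(\lambda)}b\) is a multiplier by the preliminary step above. Since \(k_\lambda\) is cyclic for \(S_b\) by Example~\ref{example-rkhardycyclic}, Lemma~\ref{Lem:product-multiplicateur} reduces the cyclicity of \(k_\lambda^b\) to that of \(1-cb\), where \(c:=\overline{b(\lambda)}\) satisfies \(|c|<1\) (the strict inequality uses that \(b\) is non-extreme, hence non-constant unimodular, so \(|b(\lambda)|<1\) on \(\mathbb D\) by the maximum principle). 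For this I invoke Lemma~\ref{Lem:completenes}: the family \(\{(1-cb)k_\mu:\mu\in\mathbb D\}\) has dense linear span in \(\HH(b)\). By density of polynomials~\eqref{eq:density-polynomial}, each \(k_\mu\) is a \(\HH(b)\)-limit of polynomials \(q_n\); the multiplier bound gives \((1-cb)q_n\to(1-cb)k_\mu\) in \(\HH(b)\), and since each \((1-cb)q_n\) is a polynomial multiple of \(1-cb\), we obtain \((1-cb)k_\mu\in [1-cb]_{S_b}\). Taking closed linear spans yields \([1-cb]_{S_b}=\HH(b)\), so \(1-cb\) is cyclic and hence so is \(k_\lambda^b\).

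For part (b), the factorization \(bk_\lambda=b\cdot k_\lambda\) and Lemma~\ref{Lem:product-multiplicateur} reduce the question to the cyclicity of \(b\). Since \(b\in\mathfrak M(\HH(b))\), this is in turn equivalent to \(M_b:\HH(b)\to\HH(b)\) having dense range, i.e., to \(M_b^*\) being injective. If \(g\in\ker M_b^*\), then \(\langle bk_\nu,g\rangle_b=0\) for every \(\nu\in\mathbb D\), and by~\eqref{eq1EZD:lem-completeness} this inner product equals \(\overline{g^+(\nu)/a(\nu)}\), forcing \(g^+(\nu)=0\) for all such \(\nu\) (recall \(a\) is outer, hence nonzero on \(\mathbb D\)); by holomorphy, \(g^+\equiv 0\). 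Therefore \(T_{\bar b}g=T_{\bar a}g^+=0\). But \(b\) is outer, so \(bH^2\) is dense in \(H^2\) by Beurling's theorem; equivalently, \(T_{\bar b}=(M_b|_{H^2})^*\) is injective, and we conclude \(g=0\). Hence \(b\) is cyclic, and Lemma~\ref{Lem:product-multiplicateur} then gives the cyclicity of \(bk_\lambda\); the ``in particular'' statement follows by taking \(\lambda=0\), since \(k_0\equiv 1\) and thus \(bk_0=b\).

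The main delicate point is the preliminary verification that \(b\in\mathfrak M(\HH(b))\) under (HCR): it requires combining the Toeplitz computation with the (HCR)-norm equivalence \(\HH(b)=\mathcal M(\bar a)\) to extract both containment and boundedness of \(M_b\). Once this is in place, both parts follow structurally: part (a) via Lemmas~\ref{Lem:product-multiplicateur} and~\ref{Lem:completenes}, and part (b) via Lemma~\ref{Lem:product-multiplicateur}, the reproducing-kernel formula~\eqref{eq1EZD:lem-completeness}, and Beurling's theorem.
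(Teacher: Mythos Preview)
Your proof is correct and follows essentially the same approach as the paper's. Both arguments hinge on the same two technical facts: that \(b\in\mathfrak M(\HH(b))\) under (HCR) (which the paper simply cites from \cite[Theorem~28.3]{FM2}, while you supply a direct Toeplitz computation), and then that the relevant multiplication operators \(M_{1-\overline{b(\lambda)}b}\) and \(M_b\) have dense range, established via Lemma~\ref{Lem:completenes} and formula~\eqref{eq1EZD:lem-completeness} respectively. The only organizational difference is that you route the argument through Lemma~\ref{Lem:product-multiplicateur} (reducing to cyclicity of the individual factors \(1-cb\) and \(b\)), whereas the paper argues directly that \(\{p\,k_\lambda^b:p\in\mathbb C[X]\}=M_{1-\overline{b(\lambda)}b}\{p\,k_\lambda:p\in\mathbb C[X]\}\) is dense; these are the same computation viewed from two angles.
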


\begin{proof}
(a) We have $p(S_b)k_\lambda^b=(1-\overline{b(\lambda)}b)p(S_b)k_\lambda$ for every polynomial $p\in\C[X]$. Since $(a,b)$ satisfies (HCR), we have $\HH(b)=\MM(\bar a)$, and $b$ is a multiplier of $\HH(b)$. See \cite[Theorems 28.7 and 28.3]{FM2}. In particular, the multiplication operator $T=M_{1-\overline{b(\lambda)}b}$ is bounded on $\HH(b)$ and we have
\[
p(S_b)k_\lambda^b=Tp(S_b)k_\lambda\qquad \textrm{for every polynomial }p\in\C[X].
\]
Since $k_\lambda$ is cyclic for $S_b$ (see Example~\ref{example-rkhardycyclic}), in order to check that $k_\lambda^b$ is also cyclic for $S_b$ it is sufficient to check that $T$ has dense range. Let $h\in\HH(b)$ be such that $h\perp \mbox{Range}(T)$. Then $h\perp Tk_\mu=k_\mu-\overline{b(\lambda)}bk_\mu$ for every $\mu\in\mathbb D$. Lemma~\ref{Lem:completenes} now implies that $h=0$, proving that $T$ has dense range. It follows that $k_\lambda^b$ is cyclic for $S_b$.

(b) The proof of (b) proceeds along the same lines of (a). We have
\[
p(S_b)bk_\lambda=Vp(S_b)k_\lambda\qquad \textrm{for every polynomial }p\in\C[X],
\]
where $V=M_b$ is the multiplication operator by $b$. As previously, in order to show that $bk_\lambda$ is cyclic for $S_b$, it is sufficient to check that $V$ has a dense range. Let $h\in\HH(b)$ be such that $h\perp \mbox{Range}(V)$. Then $h\perp Vk_\mu=bk_\mu$ for every $\mu\in\mathbb D$. By \eqref{eq1EZD:lem-completeness}, it then follows that $h^+(\mu)=0$ for every $\mu\in\mathbb D$. Then $h^+=0$ and $T_{\bar b}h=T_{\bar a}h^+=0$. But, since $b$ is outer, $T_{\bar b}$ is one-to-one, which implies that $h=0$. It then follows that $bk_\lambda$ is cyclic for $S_b$. 
\end{proof}

We finish the paper with the following question:

\begin{question}
Does Proposition \ref{chose} hold true without the assumption that $(a,b)$ satisfies (HCR)?
\end{question}

%

\bibliographystyle{plain}

\bibliography{references}

\end{document}